\definecolor{conditional}{rgb}{0,1,0}
\definecolor{e-mail}{rgb}{0,.40,.80}
\definecolor{reference}{rgb}{.20,.60,.22}
\definecolor{mrnumber}{rgb}{.80,.40,0}
\definecolor{citation}{rgb}{.20,.60,.22}
\newtheorem{thm}{Main Result}
\newtheorem{theorem}{Theorem}
\newtheorem{problem}{Problem}
\newtheorem{lemma}{Lemma}
\newtheorem{proposition}{Proposition}
\newtheorem{corollary}{Corollary}
\newtheorem{notation}{Notation}
\newtheorem{definition}{Definition}
\newtheorem{remark}{Remark}
\newtheorem{example}{Example}
\DeclareMathOperator{\ord}{ord}
\newcommand{\Q}{\mathbb{Q}}
\newcommand{\CC}{\mathbb{C}}
\newcommand{\Frac}{\text{Frac}}
\DeclareMathOperator{\In}{In}
\DeclareMathOperator{\Out}{Out}
\DeclareMathOperator{\Leak}{Leak}
\def\BibTeX{{\rm B\kern-.05em{\sc i\kern-.025em b}\kern-.08em
    T\kern-.1667em\lower.7ex\hbox{E}\kern-.125emX}}
\begin{document}
\title{Input-output equations and identifiability of linear ODE models}
\author{Alexey Ovchinnikov, Gleb Pogudin, and Peter Thompson
\thanks{
This work was partially supported by the NSF grants CCF-1563942, CCF-1564132, CCF-1319632, DMS-1760448, CCF-1708884, DMS-1853650, DMS-1853482; NSA grant \#H98230-18-1-0016; and CUNY grants PSC-CUNY \#69827-0047, \#60098-00 48.}
\thanks{A. Ovchinnikov is with CUNY Queens College, Department of Mathematics,
65-30 Kissena Blvd, Queens, NY 11367, USA and 
CUNY Graduate Center, Ph.D. Programs in Mathematics and Computer Science, 365 Fifth Avenue,
New York, NY 10016, USA (e-mail: aovchinnikov@qc.cuny.edu). }
\thanks{G. Pogudin was with Courant Institute of Mathematical Sciences, New York University, New York, NY 10012, USA and the Higher School of Economics, Faculty of Computer Science, Moscow, 109028, Russia. He is 
now with LIX, CNRS, \'Ecole Polytechnique, Institute Polytechnique de Paris, 1 rue Honor\'e d'Estienne d'Orves, 91120, Palaiseau, France (e-mail: gleb.pogudin@polytechnique.edu).}
\thanks{P. Thompson was with CUNY Graduate Center, Ph.D. Program in Mathematics, 365 Fifth Avenue,
New York, NY 10016, USA
 (e-mail: peterthompsonmath@gmail.com, peter.thompson@liu.se).}}

\maketitle

\begin{abstract}
Structural identifiability is a property of a differential model with parameters that allows for the parameters to be determined from the model equations in the absence of noise. The method of input-output equations is one  method for verifying structural identifiability. 
This method stands out in its importance because the additional insights it provides can be used to analyze and improve models.
However, its complete theoretical grounds and applicability are still to be established. A subtlety and key for this method to work  correctly is  knowing
 whether
the coefficients of these equations are identifiable. 

In this paper, to address this, we prove  identifiability of the coefficients of input-output equations for types of differential models that often appear in practice, such as linear models with one output and linear compartment models in which, from each compartment, one can reach either a leak or an input. 
This shows that  checking identifiability via input-output equations for these models is legitimate and, as we prove, that the field of identifiable functions is generated by the coefficients of the input-output equations.
Finally, we exploit  
 a
connection between input-output equations and the transfer function matrix to show that, for a linear compartment model with an input and strongly connected graph, the field of all identifiable functions is generated by the coefficients of the transfer function matrix even if the 
initial conditions are generic.
\end{abstract}

\begin{IEEEkeywords}
identifiable functions, input-output equations, linear compartment models,  structural parameter identifiability

\end{IEEEkeywords}

\section{Introduction}

\subsection{Background}\label{sec:background}
Structural global identifiability (in what follows, we will say just ``identifiability'' for simplicity) is a property of a differential model with parameters that allows for the parameters to be uniquely determined from the model equations, noiseless data and sufficiently exciting inputs (also known as the persistence of excitation, see~\cite{LJ94, V18, XiaMoog}). Performing  identifiablity analysis is an important first step in evaluating and, if needed, adjusting the model. 
There are different approaches to assessing  identifiability (see \cite{comparison,Hong,Villaverde2019} for descriptions of methods).  If structural identifiability is established, one can assess practical identifiability before doing reliable  parameter identification~\cite{DiStefano2014,WP1997}.

There is a relaxed version of identifiability, namely, local identifiability.
It refers to the possibility of determining finitely many feasible parameter values.
There are efficient algorithms~\cite{KAJ2012,S2002} for checking whether a given function of parameters is locally identifiable.
To the best of our knowledge, there are no complete and efficient algorithms for finding all locally identifiable functions of parameters (see~\cite[page~7]{Villaverde2016} for a partial algorithm), a key to efficient model reparametrization for improving the model.

\subsubsection{How the errors that we prevent occur in existing methods}
One of the
approaches, which is widely used, is based on \textbf{input-output equations}~\cite{BD16, Saccomani2003,Meshkat14res,Meshkat18,BEC2013,DAISY,DAISYIFAC,DAISYMED,COMBOS,COMBOS2,MRS2016, graph_ident}, and has appeared in
software packages such as
 COMBOS, DAISY, and their successors. 
 An existing 
 challenge is to understand the a priori applicability of the method, as the above software packages make incorrect identifiability conclusions for some models. We address this challenge in the present paper.
 
 We will now discuss this in more detail.
Roughly speaking,  
 input-output equations
are ``minimal'' equations that depend only on the input and output variables and parameters (see~\cite{KYN2020} for applications other than identifiability).
We will describe a typical algorithm based on this approach using the following linear compartment model as a running example:
\begin{equation}\label{eq:2comp}
    \begin{cases}
      x_1' = -(a_{01} + a_{21})x_1 + a_{12} x_2 + u,\\
      x_2' = a_{21} x_1 - a_{12}x_2,\\
      y = x_2.
    \end{cases}
\end{equation}
In the above system,
\begin{itemize}[leftmargin=7.5mm]
    \item $x_1$ and $x_2$ are 
    the state variables;
    \item $y$ is the output observed in the experiment;
    \item $u$ is the input (control) function to be chosen by the experimenter;
    \item $a_{01}, a_{12}, a_{21}$ are unknown scalar parameters.
\end{itemize}
The question is whether the values of the parameters $a_{01}, a_{12}, a_{21}$ can be determined from $y$ and $u$.
A typical algorithm operates as follows:
\begin{enumerate}[label=\textbf{(\arabic*)}, leftmargin=6mm]
  \item  Find input-output equations, writing
  them as (differential) polynomials in the input and output variables. 
  For~\eqref{eq:2comp}, a calculation shows that the input-output equation is
  \begin{equation}\label{eq:io_example}
    y'' + (a_{01} + a_{12} + a_{21}) y' + a_{01}a_{12}y  - a_{21}u = 0.\hypertarget{assumption}{} 
  \end{equation}
  \item\label{step:assumption}
  Use the following \textbf{Assumption (A)}: 
  
  \begin{displayquote}\emph{a function of parameters is identifiable if and only if it can be expressed as a rational function of the coefficients of the input-output equations}. 
  \end{displayquote}
  In our example, this  amounts to assuming that a function of parameters is identifiable if and only if it can be expressed as a rational function of $a_{01}+a_{12}+a_{21}$, $a_{01}a_{12}$, and $a_{21}$.
  
  One possible rationale behind this assumption is the ``solvability'' condition from~\cite[Remark~3]{Saccomani2003}: due to the ``minimality'' of the input-output equations, one would expect that there exist $N$ and $t_1,\ldots,t_N \in \mathbb{R}$ such that the linear system
  \begin{equation}\label{eq:solvability_example}
  \begin{cases}
    y''(t_1) + c_1 y'(t_1) +c_2y(t_1) + c_3u(t_1) = 0\\
   \quad\vdots\\
   y''(t_N) + c_1 y'(t_N) +c_2y(t_N) + c_3u(t_N) =0
  \end{cases}
  \end{equation}
  in $c_1,c_2,c_3$ has a unique solution in terms of $y(t_i), y'(t_i), y''(t_i), u(t_i)$, $1\leqslant i\leqslant N$, so the coefficients of~\eqref{eq:io_example} are identifiable.
  However, \emph{the assumption is not always satisfied} and, consequently, such $N$ and $t_1,\ldots,t_N$ might not exist at all. 
  This is a reason, e.g., why DAISY may miss the non-identifiability of some of the parameters in those systems.
  An example is given in Section~\ref{sec:ex_nonsolvable} (see also \cite[Example~2.14]{Hong} and~\cite[Sections~5.2 and 5.3]{allident}).
  
  \item  Set up a system of polynomial equations in the parameters setting the coefficients of~\eqref{eq:io_example} equal to new variables,
  \begin{equation}\label{sys:intro_example}
    \begin{cases}
    a_{01} + a_{12} + a_{21} = c_1\\
    a_{01}a_{12}=c_2\\
    -a_{21}=c_3,
    \end{cases}
  \end{equation}
  and verify if~\eqref{sys:intro_example} as a system in the $a$'s with coefficients in the field $\mathbb{C}(c_1,c_2,c_3)$ has a unique solution.
  This can be done, 
   e.g., using Gr\"obner bases.
  Alternatively, for~\eqref{sys:intro_example}, one can see that $a_{21} = -c_3$ can be uniquely recovered, but the values of $a_{01}$ and $a_{12}$ are known only up to exchange due to the symmetry of~\eqref{sys:intro_example} with respect to $a_{01}$ and $a_{12}$.
\end{enumerate}

\subsubsection{Importance of the IO-equation method: finds all identifiable combinations and helps with reparametrization}
Even though there are complete algorithms (that is, not relying on any assumption like \hyperlink{assumption}{Assumption (A)} above) for assessing structural identifiability (see, e.g., \cite{SIAN}), establishing when
the input-output equation method is valid is important
 because:
\begin{itemize}[leftmargin=5mm]
\item This method can produce {\em all identifiable functions} (also referred to as ``true parameters'' in~\cite[Remark~2]{KYN2020}),
not just assess identifiability of specific parameters.
More precisely, 
\cite[Corollary~5.8]{ident-compare}
shows that the field generated by the coefficients of the input-output equations contains all of the identifiable functions.

In  example~\eqref{eq:2comp}, the field of identifiable functions is generated by the coefficients of~\eqref{eq:io_example}, so it is equal to \[\CC(a_{01} + a_{12} + a_{21},\; a_{01} a_{12},\; a_{21}) = \CC(a_{01} + a_{12},\; a_{01} a_{12},\; a_{21}).\]
Generators of the field of identifiable functions can be  used to reparametrize the model~\cite{BD16, BMM2004, MS2014}.

\item This method can be used for  proving general theorems about classes of models~\cite{Meshkat14res,Meshkat18}.
\item For a large class of linear compartment models, there are efficient methods for computing their input-output equations \cite{Meshkat14res,MRS2016,Meshkat18}.
\end{itemize}

\subsection{The problem}\label{subsec:main_problem}
As was described above, the approach to assessing identifiability via input-output equations has been used much in the last three decades and has its own distinctive features.
However, it heavily relies on \hyperlink{assumption}{Assumption~(A)}, which is not always true (see~\cite[Example~2.14]{Hong} and~\cite[Section~5.2]{allident}).
It can be verified by an algorithm~\cite[Section~4.1]{DVJBNP01} and~\cite[Section~3.4]{MXPW11} but is not verified in any implementation we have seen (including~\cite{DAISY, COMBOS}).
\textbf{The general problem} studied in this paper is: 
\begin{displayquote}\emph{to  
determine classes of ODE models that satisfy \hyperlink{assumption}{Assumption~(A)} a priori; consequently, the approach via input-output equations gives correct result for these models}.
\end{displayquote}
Discrepancy between different notions of identifiability is not unusual given the wide range of experimental setups and mathematical tools involved.
We refer the reader to a recent review~\cite{AnstettCollin2020} (see also~\cite{XiaMoog}) presenting a number of notions of identifiability together with some known (in)equivalences between them.
Our work clarifies
this big picture by giving explicit and easy to check (unlike~\cite{ident-compare}) conditions for equivalence of different ways to assess identifiability.

\subsection{Our results}
\emph{The first part} of our results shows that  \hyperlink{assumption}{Assumption (A)} is a priori satisfied for the following classes of models often appearing in practice \cite{AASC1998,BA1970,DiStefano2014,GC1990,GD1985,COMBOS,V1998,WL1981}: 
\begin{itemize}[leftmargin=7.5mm]
    \item linear models with one output (Main Result~\ref{thm:main1});
    \item linear compartment models such that, from every vertex of the graph of the model, at least one leak or input is reachable (Main Result~\ref{thm:main2}).
\end{itemize}
Checking whether the model is of one of these types can be done just by visual inspection.
For instance, as we will see in Example~\ref{ex:lincomp}, each of these theorems is applicable to  model~\eqref{eq:2comp}.
 Main Result~\ref{thm:main1} cannot be strengthened to more than one output if all linear models are allowed, see~Section~\ref{sec:ex_nonsolvable} and, for non-linear systems, see \cite[Lemma~5.1]{second_paper}.

\emph{The second part} is devoted to relaxing the ``minimality'' condition on the input-output equations.
For linear compartment models, elegant relations involving only parameters, inputs, and outputs were proposed in~\cite[Theorem~2]{Meshkat14res}  based on Cramer's rule (see also~\cite[Proposition~2.3]{Meshkat18}).
In general, using these equations instead of the ``minimal'' relations in the algorithm above would give incorrect results  \cite[Remark~3.11]{Meshkat18}.

However, in Main Result~\ref{thm:main3}, we show that, for linear compartment models with an input and  whose  graph is strongly connected, one can use these equations as the input-output equations and obtain the full field of identifiable functions.

Furthermore, we apply Main Result~\ref{thm:main3} to the transfer function method~\cite[page~444]{DiStefanoBook}.
It is known that, in  case of multiple outputs, using only the coefficients of the transfer function matrix (as opposed to the full output transforms) may lead to incorrect  
identifiability conclusions~\cite[Example~10.6]{DiStefanoBook}.
As a corollary of our results (Corollary~\ref{cor:trans}), we show that this is not the case for such linear compartment models.

We state the consequences of our results for algorithms for computing identifiable functions in Section~\ref{sec:applications_to_algorithms} and illustrate the conditions in our main results in Section~\ref{sec:illust_ex}.


\subsection{Structure of the paper}
Basic notions and notation from differential algebra, identifiability, and linear compartment models are given in Section~\ref{sec:prelim}.
The main results in a brief form are stated in Section~\ref{sec:main} and then  stated and proved in Section~\ref{sec:proofs}.  In Section~\ref{sec:illust_ex}, we illustrate our main results with examples, e.g., showing how existing identifiability approaches could fail.
 The \hyperlink{AppendixA}{appendix} has results we use relating the notions used in the paper for linear models to the corresponding notions for nonlinear systems.


\section{Preliminaries}\label{sec:prelim} In this section, we recall the notation/notions found in the literature and introduce our own  notation/notions 
to state our main results in Section~\ref{sec:main}.
All  fields 
have characteristic zero.


\subsection{Identifiability of linear models}\label{sec:prelim_ident}

\hypertarget{vars}{}
Fix positive integers $\lambda$, $n$, $m$, and $\kappa$ for the remainder of the paper.
Let $\bm{\mu} = (\mu_1,\ldots,\mu_\lambda)$, $\mathbf{x} = (x_1, \ldots, x_n)$, $\mathbf{y} = (y_1, \ldots, y_m)$, and $\mathbf{u} = (u_1,\ldots,u_\kappa)$.
Consider a system of ODEs \hypertarget{Sigma}{}
\begin{equation}\label{eq:sigma_lin}
\Sigma = 
\begin{cases}
\mathbf{x}' = \mathbf{f}(\mathbf{x}, \bm{\mu}, \mathbf{u}), \\
\mathbf{y} = \mathbf{g}(\mathbf{x}, \bm{\mu}, \mathbf{u}),\\
\mathbf{x}(0) = \mathbf{x}^\ast,
\end{cases}
\end{equation}
where $\mathbf{f} = (f_1, \ldots, f_n)$ and $\mathbf{g} = (g_1, \ldots, g_m)$ are tuples of polynomials in $\mathbf{x},\mathbf{u}$ over $\CC(\bm{\mu})$ of \emph{degree at most one}.

For a rational function $h(\bm{\mu}) \in \CC(\bm{\mu})$, we will define two notions of identifiability: {\em identifiability} and {\em IO-identifiability}.  
The former is meaningful from the modeling standpoint; 
the latter is what the algorithm outlined in the introduction 
checks.

\subsubsection{Identifiability}
We 
 fix
notation 
 to give rigorous definitions:

\begin{notation}[Auxiliary analytic notation]
\begin{enumerate}[label = (\alph*),leftmargin=7.5mm]
\item[]
    \item Let $\CC^\infty(0)$ denote the set of all functions that are complex analytic in some neighborhood of $t = 0$.
    \item Let $\Omega \subset \mathbb{C}^{\lambda}$ be the complement to the set where at least one of the denominators of the coefficients of~\eqref{eq:sigma_lin} in $\mathbb{C}(\bm{\mu})$ vanishes.
    \item For every $h \in \CC(\bm{\mu})$, we set
    \[
    \Omega_h := \CC^n \times \{\hat{\bm{\mu}} \in \Omega \mid h(\hat{\bm{\mu}}) \text{ well-defined}\} \times (\CC^\infty(0))^\kappa.
    \]
    \item For $(\hat{\mathbf{x}}^\ast, \hat{\bm{\mu}}, \hat{\mathbf{u}})$  such that $\hat{\bm{\mu}} \in \Omega$, let $X(\hat{\mathbf{x}}^\ast, \hat{\bm{\mu}}, \hat{\mathbf{u}})$ and $Y(\hat{\mathbf{x}}^\ast, \hat{\bm{\mu}}, \hat{\mathbf{u}})$ denote the unique solution over $\CC^\infty(0)$ of the instance of $\Sigma$ with $\mathbf{x}^\ast = \hat{\mathbf{x}}^\ast$, $\bm{\mu} = \hat{\bm{\mu}}$, and $\mathbf{u} = \hat{\mathbf{u}}$ (see~\cite[Theorem~2.2.2]{Hille}).
    
    \item For any positive integer $s$, a subset $U \subset \CC^s$ is called \emph{Zariski open} if there exists a polynomial $P$ on $\CC^s$ such that $U$ is the complement to the zero set of $P$.
    \item For any positive integer $s$, a subset $U \subset (\CC^\infty(0))^s$ is called \emph{Zariski open} if there exists a polynomial $P$ in $z_1, \ldots, z_s$ and their derivatives such that \[U = \{ \hat{\bm{z}} \in (\CC^\infty(0))^s \mid P(\hat{\bm{z}})|_{t = 0} \neq 0\}.\]
    \item For any positive integer $s$ and $X = \CC^s$ or $(\CC^\infty(0))^s$, the set of all nonempty Zariski open subsets of $X$ will be denoted by $\tau(X)$.
\end{enumerate}
\end{notation}

\begin{definition}[Identifiability, see {\cite[Definition~2.5]{Hong}}]
We say that $h(\bm{\mu}) \in \CC(\bm{\mu})$ is \emph{identifiable} if
\begin{align*}
  \exists \Theta \in \tau(\CC^n &\times \CC^\lambda) \; \exists U \in \tau((\CC^\infty(0))^\kappa)\;\\
  \forall (\hat{\mathbf{x}}^\ast, \hat{\bm{\mu}}, \hat{\mathbf{u}}) \in (\Theta &\times U) \cap \Omega_h \quad |S_h(\hat{\mathbf{x}}^\ast, \hat{\bm{\mu}}, \hat{\mathbf{u}})| = 1,\\ 
 \text{where}\ \  S_h(\hat{\mathbf{x}}^\ast, \hat{\bm{\mu}}, \hat{\mathbf{u}}) :&= \{h(\tilde{\bm{\mu}}) \mid  \exists\;(\tilde{\mathbf{x}}^\ast, \tilde{\bm{\mu}}, \hat{\mathbf{u}}) \in \Omega_h  \\
  &\text{ such that }\; Y(\hat{\mathbf{x}}^\ast, \hat{\bm{\mu}}, \hat{\mathbf{u}}) = Y(\tilde{\mathbf{x}}^\ast, \tilde{\bm{\mu}}, \hat{\mathbf{u}}) \}.
\end{align*}
The field $\{h \in \mathbb{C}(\bm{\mu})\mid h \text{ is identifiable}\}$ will be called \emph{the field of identifiable functions}.
\end{definition}

\subsubsection{Input-output identifiability}
The notion of IO-identifiability can be defined for systems with rational right-hand side (see Section~\ref{sec:ident_general} from the Appendix).
Here we give a specialization of the general definition to the linear case (the equivalence of 
Definition~\ref{def:ioid_lin} and
Definition~\ref{def:ioid} 
restricted to the linear case is established in Proposition~\ref{prop:linear_equiv}). 
For this, we will first recall several standard notions from differential algebra:

\begin{notation}[Differential rings and ideals]\label{not:lin_diffalg}
\begin{enumerate}[label = (\alph*),leftmargin=6mm]
  \item[]
  \item A {\em differential ring} $(R,\delta)$ is a commutative ring with a derivation $':R\to R$, that is, a map such that, for all $a,b\in R$, $(a+b)' = a' + b'$ and $(ab)' = a' b + a b'$. 
  \item The {\em ring of differential polynomials} in the variables $x_1,\ldots,x_n$ over a field $K$ is the ring $K[x_j^{(i)}\mid i\geqslant 0,\, 1\leqslant j\leqslant n]$ with a derivation defined on the ring by $(x_j^{(i)})' := x_j^{(i + 1)}$. 
  This differential ring is denoted by $K\{x_1,\ldots,x_n\}$.
  \item For a differential polynomial $P \in K\{x_1, \ldots, x_n\}$ and $1 \leqslant i \leqslant n$, \emph{the order of $P$ with respect to $x_i$} is the order of the highest derivative of $x_i$ appearing in $P$ ($-\infty$ if $x_i$ does not appear in $P$).
  It is denoted by $\ord_{x_i} P$.
  \hypertarget{DiffIdeal}{}
  \item An ideal $I$ of a differential ring $(R,\delta)$ is called a {\em differential ideal} if, for all $a \in I$, $\delta(a)\in I$. For $F\subset R$, the smallest differential ideal containing set $F$ is denoted by $[F]$.
  \hypertarget{ISigma}{}
    \item  
   For $\Sigma$ as in~\eqref{eq:sigma_lin},  
     let
    $I_\Sigma=[\mathbf{x}'-\mathbf{f},\mathbf{y}-\mathbf{g}] \subset \CC(\bm{\mu})\{\mathbf{x},\mathbf{y},\mathbf{u}\}$ 
   be
    the differential ideal of $\Sigma$.
    Informally,  
    $I_\Sigma$
    is the ideal of all relations 
     among 
    components of a generic solution of~$\Sigma$.
\end{enumerate}
\end{notation}

\begin{definition}[a full set of input-output equations]\label{def:ioeq}
For $\Sigma$ as in~\eqref{eq:sigma_lin}, a tuple $(p_1, \ldots, p_m)$ of differential polynomials from $\CC(\bm{\mu})\{\mathbf{y}, \mathbf{u}\}$ is called \emph{a full set of input-output equations} if there 
 is
an ordering of the output variables, which we will assume  to be $y_{1} < y_{2} < \ldots < y_{m}$ to simplify notation, such that 
\begin{enumerate}[label = (\arabic*),leftmargin=6mm]
    \item $p_1$ is the linear differential polynomial in $y_{1}$ and $\mathbf{u}$ in $I_\Sigma$ of the smallest possible order in $y_{1}$ such that the coefficient of the highest derivative of $y_{1}$ is one.
    \item For every $\ell > 1$, $p_\ell$ is the linear differential polynomial in $y_{1}, \ldots, y_{\ell}$ and $\mathbf{u}$ in $I_\Sigma$ such that 
    \begin{itemize}[leftmargin=4mm]
        \item $\ord_{y_{j}} p_\ell < \ord_{y_{j}} p_j$ for every $1 \leqslant j < \ell$;
        \item the coefficient of the highest derivative of $y_{\ell}$ in $p_\ell$ is 1;
        \item $\ord_{y_{\ell}} p_\ell$ is the smallest possible.
    \end{itemize}
\end{enumerate}
\end{definition}
\hypertarget{idfield}{}
\begin{definition}[IO-identifiable function]\label{def:ioid_lin}
    For a system $\Sigma$, consider a full set $E$ of input-output equations.
    Then the subfield $k$ of $\CC(\bm{\mu})$ generated by the coefficients of $E$ over $\mathbb{C}$
    is called \emph{the field of input-output identifiable (IO-identifiable) functions}.  
    We call $h \in \CC(\bm{\mu})$  \emph{IO-identifiable} if $h \in k$.
\end{definition}

\begin{remark}
    Proposition~\ref{prop:linear_equiv} establishes the equivalence of this definition to Definition~\ref{def:ioid}, which is applicable to a general rational ODE systems. 
    Proposition~\ref{prop:linear_equiv} also implies that the field of input-output identifiable functions does not depend on the choice of a full set of input-output equations.
\end{remark}
    
For examples of input-output equations and IO-identifiable functions, see Section~\ref{sec:illust_ex}.

\subsubsection{Comparison of identifiability and IO-identifiability}
\begin{remark}[Meaning of IO-identifiability]
One can see that the field of IO-identifiable functions is exactly what will be computed by the first two steps of the algorithm outlined in the introduction (see also Algorithm~\ref{alg:main}).
The \textbf{general problem} as stated in Section~\ref{subsec:main_problem} can be restated as: \begin{displayquote}\begin{center}\emph{Determine classes of ODE models for which}
    \text{\emph{identifiable}} $\iff$ \text{\emph{IO-identifiable}}.
\end{center}
  \end{displayquote}
\end{remark}
\cite[Theorem~4.2]{ident-compare} together with~\cite[Example~2.14]{Hong} (see also  
Section~\ref{sec:ex_nonsolvable}
and \cite[Sections~5.2 and~5.3]{allident} with non-constant dynamics and outputs) imply that: 
\begin{equation}\label{eq:identincl}
  \boxed{\text{Identifiable}} \subsetneq \boxed{\text{IO-identifiable}}.
\end{equation}


\subsection{Linear compartment models}\label{sec:lincomp}
In this section, we discuss linear compartment models~\cite{LinCompBook}. 
Such a model consists of a set of compartments in which material is transferred  from some compartments to other compartments.  
We also allow for leakage of material from some compartments out of the system, and for input of material into some compartments from outside the system.\hypertarget{GraphG}{}

We use the notation of~\cite[Section~2]{Meshkat14res} but our construction will be slightly more general (allowing scaling for inputs and outputs).  
Let $G$ be a simple directed graph with $n$ vertices $V$ and edges $E$.  Let $\In$, $\Out$, and $\Leak$ be subsets of $V$.  
The coefficients of material transfer are 
\[
\{a_{ji} \;|\; j \leftarrow i \in E\}\quad \text{and}\quad \{a_{0i} \;|\; i \in \Leak\},
\]
and there may be some additional parameters, we will denote all the parameters by $\bm{\mu}$ as before.
For $i=1,\ldots,n$, let $x_i$ be the quantity of material in compartment $i$.  
If $i \in \In$, let $b_i(\bm{\mu})u_i$ be the rate at which the experimenter inputs material into the $i$-th compartment, where $b_i \in \mathbb{C}(\bm{\mu}) \setminus \{0\}$.
If $i \in \Out$, let $y_i = c_i(\bm{\mu})x_i$, where $c_i \in \mathbb{C}(\bm{\mu}) \setminus \{0\}$. 
Without loss of generality, we assume \[\Out = \{1,\ldots,m\}.\]  
Now the system of equations governing the dynamics of $x_1,\ldots,x_n$ is given by 
\begin{equation}\label{eq:linear_compartment}
  \Sigma = 
  \begin{cases}
\mathbf{x}' = A(G)\mathbf{x} + \mathbf{u}, \\
y_i = c_i(\bm{\mu})x_i, \quad \text{for every } i \in \Out,
\end{cases}
\hypertarget{ylin}{}
\end{equation}
where $\mathbf{x} = (x_1,\ldots,x_n)^T$, $\mathbf{u}$ is the $n \times 1$ matrix whose $i$-th entry is $b_i(\bm{\mu})u_i$ if $i\in \In$ and $0$ otherwise, and $A(G)$ is the matrix (generalizing the Laplacian of the graph) defined by\hypertarget{AG}{}
\begin{equation}\label{eq:matrixa}
A(G)_{ij} = 
\begin{cases}
-a_{0i} - \sum\limits_{k:i\rightarrow k \in E}a_{ki}, \quad i = j, i \in \Leak \\
- \sum\limits_{k:i\rightarrow k \in E}a_{ki}, \quad i = j, i \not\in \Leak \\
a_{ij}, \quad j \rightarrow i \in E \\
0, \quad \text{otherwise}.
\end{cases}\hypertarget{ulin}{}
\end{equation}
In the notation of~\eqref{eq:sigma}, we have 
\[\mathbf{x} = \{x_1,\ldots,x_n\},\quad
\mathbf{y} = \{y_1,\ldots,y_m\},\quad
\mathbf{u} = \{u_i \;|\; i \in \In\}.
\]
\begin{definition}
A system $\Sigma$ is called a \emph{linear compartment model} if there exists a simple directed graph $G$ with edges $E$ and vertices $V$, subsets $\In$, $\Out$, and $\Leak$ of $V$,  and functions $b_i, c_j \in \mathbb{C}(\bm{\mu}) \setminus \{0\}$ such that 
$\Sigma$ 
has the form of~\eqref{eq:linear_compartment}.
\end{definition}

It was observed in~\cite[Theorem~2]{Meshkat14res} that, for a linear compartment model, one can obtain relations among inputs, outputs, and parameters as follows.
Let $\partial$ be the differentiation operator.
Let $M_{ji}(G)$ be the submatrix of $\partial I - A(G)$ obtained by deleting the $j$-th row and $i$-th column.
Then~\cite[Theorem~2]{Meshkat14res} yields that system~\eqref{eq:linear_compartment} implies that for every  $i \in \Out$,
\begin{equation}\label{eq:io_lincomp}
    \det(\partial I - A) (y_i) - \tfrac{1}{c_i(\bm{\mu})}\sum\limits_{j \in \In} (-1)^{i + j} \det(M_{ji}) (b_j(\bm{\mu})u_j) = 0.
\end{equation}
\cite[Theorem~3.8]{Meshkat18} gives a refined version of~\eqref{eq:io_lincomp} coinciding with~\eqref{eq:io_lincomp} for the cases we consider in our main results.

\begin{definition}[Reachability]\label{def:reach}
We say \emph{vertex $v$ is reachable from vertex $w$} or \emph{one can reach vertex $v$ from vertex $w$} if there exists a directed path from $w$ to $v$.  For example, in the graph $1 \rightarrow 2$, vertex $2$ is reachable from vertex $1$.  We say a leak (resp. input) is reachable from $w$ if there exists a vertex $v$ in $\Leak$ (resp. $\In$) such that $v$ is reachable from $w$.
\end{definition}

\begin{example}\label{ex:lincomp}
Consider the graph
\begin{center}
\begin{tikzpicture}
         \node (1) at (2, 0) [circle, draw=black, font=\LARGE] {1};
         \node (2) at (6, 0) [circle, draw=black, font=\LARGE] {2};
         \node (leak) at (0, 2) [draw=none] {};
         \node (input) at (4, 2) [draw=none, font=\large] {$u$};
         \node (output) at (7.5, 1.5) [circle, draw=black] {};
          \draw[decoration={markings,mark=at position 1 with
    {\arrow[scale=4,>=stealth]{>}}},postaction={decorate}] ([yshift=-2mm,xshift=-4mm]2.north) -- ([yshift=-2mm,xshift=4mm]1.north) node[above, pos=.5, font=\large] {$a_{12}$};
        \draw[decoration={markings,mark=at position 1 with
    {\arrow[scale=4,>=stealth]{>}}},postaction={decorate}] ([yshift=2mm,xshift=4mm]1.south) -- ([yshift=2mm,xshift=-4mm]2.south) node[below, pos=.5, font=\large] {$a_{21}$};
           \draw[decoration={markings,mark=at position 1 with
    {\arrow[scale=4,>=stealth]{>}}},postaction={decorate}] (1) -- (leak) node[above, sloped, pos=.4, font=\large] {$a_{01}$};
            \draw[decoration={markings,mark=at position 1 with
    {\arrow[scale=4,>=stealth]{>}}},postaction={decorate}] (input) -- ([xshift=2mm]1.north);
            \draw[decoration={markings,mark=at position 1 with
    {\arrow[scale=4,>=stealth]{}}},postaction={decorate}] (output) -- (2);
      \end{tikzpicture}
\end{center}
Here $G$ is the graph given by \[V = \{1,2\}\quad \text{and}\quad E = \{1 \rightarrow 2, \; 2 \rightarrow 1\}.\]  The arrow leaving compartment $1$ indicates that $\Leak = \{1\}$, the arrow entering compartment $1$ indicates that $\In = \{ 1 \}$, and the other decoration to compartment $2$ indicates that $\Out = \{2\}$. Note that the input and leak arrows, as well as the output decoration, are not considered part of the graph.
One can see that the corresponding system of differential equations coincides with~\eqref{eq:2comp} and can be written as
\begin{equation*}
\begin{pmatrix}x_1 \\ x_2\end{pmatrix}' = \begin{pmatrix}-(a_{01}+a_{21}) & a_{12} \\ a_{21} & -a_{12} \end{pmatrix}
\begin{pmatrix}x_1 \\ x_2\end{pmatrix} + 
\begin{pmatrix}u \\ 0\end{pmatrix}, \ y = x_2.
\end{equation*}
One can see that this system satisfies the conditions of Theorems~\ref{thm:main1}, \ref{thm:main2}, and~\ref{thm:main3}. 
A direct computation shows that the input-output equation~\eqref{eq:io_example} is a special case of~\eqref{eq:io_lincomp}.
\end{example}
\subsection{Existing algorithms used in practice yet to be justified}\label{sec:applications_to_algorithms}
 In this section, we will present and justify (rephrasing our Main Results~\ref{thm:main1},~\ref{thm:main2}, and~\ref{thm:main3}) the correctness of two versions  (Algorithms~\ref{alg:main} and~\ref{alg:compartment}) of the algorithm outlines in Section~\ref{sec:background} that were not previously fully justified. Algorithm~\ref{alg:main}  is one of the key components of, e.g., DAISY~\cite{DAISY}, and Algorithm~\ref{alg:compartment} summarizes the approach from~\cite[Definition~3.9]{Meshkat18}.  Our justifications are based on the assumptions stated in Corollaries~\ref{cor:algorithm1} and~\ref{cor:algorithm2}. Omitting some of the assumptions could lead to incorrect conclusions, as we show in Section~\ref{sec:ce}.

\begin{algorithm}[H]
\caption{Computing identifiable functions}\label{alg:main}
\begin{description}
\item[Input] System $\Sigma$ as in~\eqref{eq:sigma}
\item[Output] Generators of the field of identifiable functions of $\Sigma$ (see Corollary~\ref{cor:algorithm1})
\end{description}

\begin{enumerate}[label = \textbf{(Step~\arabic*)}, leftmargin=1mm, align=left]
    \item    Compute a full set $\mathcal{C}$ of input-output equations of $\Sigma$.
    \item Return the coefficients of $\mathcal{C}$
    considered as differential polynomials in $\mathbf{y}$ and $\mathbf{u}$.
\end{enumerate}
\end{algorithm}

\begin{corollary}\label{cor:algorithm1}
  Assume that $\Sigma$ satisfies one of the following: 
  \begin{enumerate}[label=(\arabic*),leftmargin=6mm]
      \item\label{cond:one} $\Sigma$ is as in~\eqref{eq:sigma_lin} and has exactly one output;
      \item\label{cond:lincomp} $\Sigma$ is a linear compartment model such that one can reach a leak or an input from every vertex.
  \end{enumerate}
  Then Algorithm~\ref{alg:main} will produce a correct result for $\Sigma$.
\end{corollary}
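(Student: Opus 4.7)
The plan is to observe that the statement is essentially a direct corollary of Main Results~\ref{thm:main1} and~\ref{thm:main2}, combined with the definition of the field of IO-identifiable functions. First, I would note that Algorithm~\ref{alg:main} outputs exactly the coefficients of a full set $\mathcal{C}$ of input-output equations; by Definition~\ref{def:ioid_lin}, these coefficients generate the field $k$ of IO-identifiable functions (and by the remark following that definition, $k$ does not depend on the choice of $\mathcal{C}$). Thus ``correct result'' amounts to showing that $k$ coincides with the field of identifiable functions.

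Next, I would invoke the universally valid inclusion~\eqref{eq:identincl}, namely Identifiable $\subseteq$ IO-identifiable (cf.~\cite[Theorem~4.2]{ident-compare}), which guarantees that every identifiable function lies in $k$. The nontrivial direction is the reverse inclusion: every coefficient of a full set of input-output equations is identifiable. Under hypothesis~\ref{cond:one}, this is precisely the content of Main Result~\ref{thm:main1} (linear models with one output), and under hypothesis~\ref{cond:lincomp}, it is the content of Main Result~\ref{thm:main2} (linear compartment models in which a leak or input is reachable from every vertex).

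Combining the two inclusions in either case gives equality of the two fields, so the coefficients of $\mathcal{C}$ returned by Algorithm~\ref{alg:main} generate the field of identifiable functions, as required. The only mild subtlety, which I would flag but not belabor, is the well-definedness of Step~(1): one must know that a full set of input-output equations in the sense of Definition~\ref{def:ioeq} exists for each of the two classes of systems under consideration, and that the field generated by its coefficients is independent of the ordering of the output variables chosen in Definition~\ref{def:ioeq}. Both facts are provided by Proposition~\ref{prop:linear_equiv} (equivalence of Definitions~\ref{def:ioid_lin} and~\ref{def:ioid}) and the subsequent remark, so there is no substantive obstacle beyond a pointer to the two Main Results.
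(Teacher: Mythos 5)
Your proposal is correct and follows essentially the same route as the paper: the paper's proof likewise observes that Algorithm~\ref{alg:main} computes generators of the field of IO-identifiable functions and then cites Main Results~\ref{thm:main1} and~\ref{thm:main2} for the coincidence of that field with the field of identifiable functions. Your additional remarks on the easy inclusion~\eqref{eq:identincl} and on the well-definedness of Step~(1) via Proposition~\ref{prop:linear_equiv} are accurate elaborations of what the paper leaves implicit.
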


\begin{proof}
  Algorithm~\ref{alg:main} will compute generators of the field of IO-identifiable functions.
   Main Results~\ref{thm:main1} and~\ref{thm:main2} imply that, for $\Sigma$ that we consider, the field of IO-identifiable functions coincides with the field of identifiable functions.
\end{proof}

\begin{algorithm}[H]
\caption{Computing identifiable functions}\label{alg:compartment}
\begin{description}
\item[Input] System $\Sigma$ as in~\eqref{eq:sigma} corresponding to a linear compartment model with graph $G$
\item[Output] Generators of the field of identifiable functions of $\Sigma$ (see Corollary~\ref{cor:algorithm2})
\end{description}

\begin{enumerate}[label = \textbf{(Step~\arabic*)},  leftmargin=1mm, align=left]
    \item For every $i \in \Out$, compute  
    an input-output equation
    $p_i$ as in~\eqref{eq:io_lincomp} (or a refined version from~\cite[Theorem~3.8]{Meshkat18}).
    \item Return the coefficients of $\{p_i \mid i \in\Out\}$ considered as differential polynomials in $\mathbf{y}$ and $\mathbf{u}$.
\end{enumerate}
\end{algorithm}

\begin{corollary}\label{cor:algorithm2}
  In the notation of Algorithm~\ref{alg:compartment}, if graph $G$ is strongly connected and has at least one input, then Algorithm~\ref{alg:compartment} will produce a correct result.
\end{corollary}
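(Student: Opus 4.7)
The plan is to deduce this corollary directly from Main Result~\ref{thm:main3} in essentially the same way that Corollary~\ref{cor:algorithm1} follows from Main Results~\ref{thm:main1} and~\ref{thm:main2}. The key observation is that the equations $p_i$ computed in Step~1 of Algorithm~\ref{alg:compartment} are precisely the relations~\eqref{eq:io_lincomp} (or their refinement from \cite[Theorem~3.8]{Meshkat18}), so the output of the algorithm is, by construction, the set of coefficients of these equations viewed as differential polynomials in $\mathbf{y}$ and $\mathbf{u}$. Thus the field generated by the output is exactly the field generated by the coefficients of $\{p_i \mid i \in \Out\}$.

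Next, I would invoke Main Result~\ref{thm:main3}: under the hypothesis that $G$ is strongly connected and has at least one input, the field of identifiable functions of $\Sigma$ is generated (over $\mathbb{C}$) by the coefficients of the equations~\eqref{eq:io_lincomp}. Combining this with the previous paragraph, the generators returned by Algorithm~\ref{alg:compartment} generate the field of identifiable functions, which is precisely the correctness statement that the algorithm is claimed to satisfy.

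The only subtlety worth checking is that the ``refined version from~\cite[Theorem~3.8]{Meshkat18}'' permitted in Step~1 does not change the field generated by the coefficients. As noted in the excerpt just after equation~\eqref{eq:io_lincomp}, the refined version of \cite[Theorem~3.8]{Meshkat18} coincides with~\eqref{eq:io_lincomp} for the classes of models addressed in our main results, so in particular for strongly connected graphs with an input the two choices produce the same coefficients (up to rescaling by elements of $\mathbb{C}(\bm{\mu})$ already in the generated field), hence the same field. Once this compatibility is in place, no additional argument is required, and the main obstacle has already been handled inside the proof of Main Result~\ref{thm:main3}; the corollary itself is essentially a restatement of that theorem at the level of algorithms.
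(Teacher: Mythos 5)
Your proposal is correct and follows the same route as the paper, whose proof is simply ``Follows from Main Result~\ref{thm:main3}''; you have just spelled out the (immediate) translation from the theorem to the algorithmic statement. The extra check about the refined equations from \cite[Theorem~3.8]{Meshkat18} is handled exactly as the paper does, by the remark after~\eqref{eq:io_lincomp} that the two versions coincide for the models covered by the main results.
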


\begin{proof}
  Follows from  Main Result~\ref{thm:main3}.
\end{proof}


\section{Main results}\label{sec:main}
In this section, we will state our main results in a condensed form.  
For the detailed statements, see the corresponding theorems in Section~\ref{sec:proofs}. 
In Section~\ref{sec:applications_to_algorithms}, we show how our main results apply to justifying an algorithm computing all identifiable functions of an ODE model. In 
Section~\ref{sec:illust_ex}, 
we present examples (both of applied and of purely mathematical nature) illustrating  the importance and use of the conditions in the statements of our main results.
Note that, while the first result is restricted to MISO systems, the second and third are applicable to MIMO systems as well.

\begin{thm}[see Theorem~\ref{thm:linearIO}]\label{thm:main1} If system $\hyperlink{Sigma}{\Sigma}$ as in~\eqref{eq:sigma_lin} has exactly one output, then IO-identifiable functions coincide with identifiable functions. 
\end{thm}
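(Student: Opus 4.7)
The plan is to use that $\text{Identifiable} \subseteq \text{IO-identifiable}$ always holds by~\eqref{eq:identincl}, so only the reverse inclusion is at stake. Since the field of IO-identifiable functions is generated over $\mathbb{C}$ by the coefficients of a full set of input-output equations, and for $m=1$ this set consists of a single equation
\[
p \;=\; y^{(d)} \;+\; \sum_{i=0}^{d-1} c_i(\bm{\mu})\, y^{(i)} \;+\; \sum_{j,k} b_{j,k}(\bm{\mu})\, u_j^{(k)} \;\in\; I_\Sigma,
\]
monic in $y^{(d)}$ and of smallest possible order $d$ in $y$, it suffices to prove that every coefficient $c_i$ and $b_{j,k}$ is identifiable.

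Fix one such coefficient, call it $h(\bm{\mu})$. Let $(\hat{\mathbf{x}}^\ast,\hat{\bm{\mu}},\hat{\mathbf{u}})$ range over a suitable Zariski-open subset on which $p$ specializes without denominator issues, and suppose some $(\tilde{\mathbf{x}}^\ast,\tilde{\bm{\mu}},\hat{\mathbf{u}}) \in \Omega_h$ produces the same output $\hat{y} = Y(\hat{\mathbf{x}}^\ast,\hat{\bm{\mu}},\hat{\mathbf{u}}) = Y(\tilde{\mathbf{x}}^\ast,\tilde{\bm{\mu}},\hat{\mathbf{u}})$. Since $p \in I_\Sigma$, specialization gives $p_{\hat{\bm{\mu}}}(\hat{y},\hat{\mathbf{u}}) = 0$ and $p_{\tilde{\bm{\mu}}}(\hat{y},\hat{\mathbf{u}}) = 0$. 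The difference $q := p_{\hat{\bm{\mu}}} - p_{\tilde{\bm{\mu}}}$ therefore vanishes on $(\hat{y},\hat{\mathbf{u}})$, and since the $y^{(d)}$-coefficients of the two equations both equal $1$ and cancel, $q$ has order strictly less than $d$ in $y$. The remaining task is to show that $q$ must be the zero polynomial, which would force $h(\hat{\bm{\mu}}) = h(\tilde{\bm{\mu}})$ for every such $\tilde{\bm{\mu}}$ and thereby identifiability of $h$.

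To prove $q \equiv 0$, I would expand $\hat{y}$ and $\hat{u}_j$ as power series at $t=0$; by linearity of~\eqref{eq:sigma_lin}, the Taylor coefficients of $\hat{y}$ are explicit $\mathbb{C}(\hat{\bm{\mu}})$-linear combinations of $\hat{\mathbf{x}}^\ast$ and the Taylor coefficients of $\hat{\mathbf{u}}$. The vanishing of $q(\hat{y},\hat{\mathbf{u}})$ then yields, at each order in $t$, a linear relation with coefficients $c_i(\hat{\bm{\mu}})-c_i(\tilde{\bm{\mu}})$ and $b_{j,k}(\hat{\bm{\mu}})-b_{j,k}(\tilde{\bm{\mu}})$ in a Wronskian-type matrix $W$ whose entries are polynomials in $(\hat{\mathbf{x}}^\ast, \hat{\bm{\mu}}, \text{Taylor jet of } \hat{\mathbf{u}}\text{ at }0)$. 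The minimality of $p$ over $\mathbb{C}(\bm{\mu})$ translates precisely into the nonvanishing of some suitable maximal minor of $W$ as a polynomial over $\mathbb{C}(\bm{\mu})$; specializing, this minor is a nonzero polynomial on $\mathbb{C}^n \times \Omega \times (\text{jets})$, so its nonvanishing locus is a nonempty Zariski-open set. On this set, $W$ has full rank and the coefficient-differences must all be zero.

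The main obstacle I expect is this last step: transferring the differential-algebraic minimality of $p$ over the generic point $\bm{\mu}$ (an intrinsic statement about the prime differential ideal $I_\Sigma \cap \mathbb{C}(\bm{\mu})\{\mathbf{y},\mathbf{u}\}$) into an explicit algebraic nondegeneracy on specialized trajectory data. Concretely, one must (i) identify a finite Taylor truncation order at which any nontrivial lower-order linear relation among $(\hat{y}^{(i)},\hat{u}_j^{(k)})$ would already be visible, (ii) exhibit the corresponding Wronskian-type determinant as a polynomial in $(\hat{\mathbf{x}}^\ast, \hat{\bm{\mu}}, \text{input jet})$, and (iii) use minimality to show this polynomial is not identically zero, so that it defines the desired Zariski-open set of ``sufficiently exciting'' data. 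This argument is specific to $m=1$: with multiple outputs, a specialized input-output equation can match between distinct parameter values through cancellation among several outputs, which is exactly the phenomenon illustrated in Section~\ref{sec:ex_nonsolvable} and which the strict inclusion in~\eqref{eq:identincl} makes possible.
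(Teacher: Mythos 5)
Your overall plan is the same as the paper's: one inclusion is free from \eqref{eq:identincl}, and for the other it suffices to show that the coefficients of the single input-output equation $p$ are identifiable, which you propose to do via a Wronskian/solvability argument on the lower-order monomials of $p$. The paper packages exactly this as two citations: a lemma from the literature saying that if the Wronskian $W$ of the monomials of $p$ other than the leading one does not lie in $[p]$ (equivalently, does not vanish on generic solutions), then the coefficients are identifiable; and its own Lemma~1 establishing that $W \notin [p]$. Your Taylor-jet construction of the matrix $W$ and the passage from ``nonzero minor over $\mathbb{C}(\bm{\mu})$'' to ``nonzero on a Zariski-open set of data'' is a reasonable sketch of the first of these.

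The genuine gap is the step you flag as ``the main obstacle'' and then assert rather than prove: that \emph{the minimality of $p$ translates precisely into the nonvanishing of the relevant Wronskian minor}. This translation is not automatic, and it is the entire technical content of the paper's Lemma~1. By the Kolchin--Kaplansky criterion, vanishing of the Wronskian modulo $[p]$ is equivalent to linear dependence of the monomials over the \emph{field of constants} $C(L)$ of $L = \operatorname{Frac}(K\{y,\mathbf{u}\}/[p])$, not over $K = \mathbb{C}(\bm{\mu})$ itself; since $C(L)$ can be strictly larger than $K$, minimality of $p$ over $K$ does not immediately forbid such a dependence. (This is exactly what goes wrong in the two-output example of Section~IV.A.1: the constant output $y_2$ lives in $C(L)$ and creates a dependence, so the Wronskian degenerates even though the IO-equations are minimal.) The paper closes this gap by a nontrivial argument: assuming a lower-order relation $Q = D_Q(y) + U_Q$ with coefficients in $C(L)$, it forms $D_0 = \gcd(D_P, D_Q)$ in $C(L)[\partial]$, shows $D_0 \in K[\partial]$ using algebraic closedness, builds $R = D_0(y) + U_R \in [p]$ with $P = D_1(R)$, and then analyzes $\ker D_1$ (splitting into the cases $\partial \mid D_1$ and $\partial \nmid D_1$) to force $U_R \in K\{\mathbf{u}\}$, contradicting that $p, p', p'', \ldots$ is a Gr\"obner basis of $[p]$. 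Without an argument of this kind, your step (iii) is unproved, and the proof is incomplete precisely at the point where the single-output hypothesis does its work.
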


\begin{thm}[see Theorem~\ref{thm:lincomp}]\label{thm:main2} If the graph of a linear compartment model is such that one can reach a leak or an input from every vertex, then IO-identifiable functions coincide with identifiable functions.
\end{thm}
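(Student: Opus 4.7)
Since the inclusion $\text{Identifiable} \subseteq \text{IO-identifiable}$ is already recorded in (10), the content of Main Result~\ref{thm:main2} is the reverse inclusion. I would prove it by exhibiting an explicit full set of input-output equations and showing that its coefficients are identifiable. For linear compartment models, the Cramer-rule identities (6) of \cite{Meshkat14res} provide, for each $i \in \Out$, one relation in $y_i$ and the inputs, involving no other output. Ordering the outputs as in Definition~\ref{def:ioeq}, these relations automatically satisfy the mixed-order condition (the order in $y_j$ for $j \neq i$ is $-\infty$) and have monic leading coefficient $1$ in $y_i$ (the characteristic polynomial $\det(\partial I - A(G))$ is monic in $\partial$). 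The only nontrivial point is that each relation has the smallest possible order in its $y_i$. Once this is established, (6) becomes a genuine full set of input-output equations, and the IO-identifiable field is generated by its coefficients.

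Minimality of the $i$-th relation in $y_i$ is equivalent to requiring that $\det(\partial I - A(G))$ share no common differential-operator factor with the family of cofactors $\{\det(M_{ji})\,b_j/c_i \mid j \in \In\}$: any such factor could be cancelled to produce a strictly shorter relation in $I_\Sigma$. I would prove the absence of such a common factor by contrapositive, translating it into a statement about invariant subspaces of $A(G)^{T}$. A common factor corresponds, via the transfer-function expression $c_i \, e_i^{T} (\partial I - A(G))^{-1} b_j e_j$, to a nontrivial $A(G)^{T}$-invariant subspace $W$ orthogonal to every input direction $b_j e_j$ and on which the $i$-th coordinate vanishes. A standard dictionary for compartmental matrices identifies the support $S \subseteq V$ of such a $W$ with a vertex set closed under out-edges in $G$, disjoint from $\In$, and whose diagonal entries in $A(G)$ carry no leak term; such an $S$ consists of vertices that reach neither a leak nor an input, contradicting the hypothesis. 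This combinatorial-to-algebraic translation, uniformly handling the asymmetric roles of leaks (the observability side) and inputs (the controllability side), is the main obstacle of the proof.

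Once the minimality of (6) is in hand, identifiability of its coefficients follows by a comparison argument. Given a generic triple $(\hat{x}^{\ast}, \hat{\bm{\mu}}, \hat{u})$ and any $(\tilde{x}^{\ast}, \tilde{\bm{\mu}}, \hat{u})$ producing the same output $Y$, both copies of the $i$-th relation from (6) (at $\hat{\bm{\mu}}$ and at $\tilde{\bm{\mu}}$) annihilate $Y$; subtracting them cancels the monic leading term $y_i^{(n)}$ and yields a linear relation in $Y^{(j)}$ and $\hat{u}^{(j)}$ of order strictly less than $n$, whose coefficients are precisely the differences of the candidate IO coefficients at the two parameter values. A routine Wronskian-type nondegeneracy argument for generic $(\hat{x}^{\ast}, \hat{u})$ then shows such a shorter relation lifts to a relation in $I_\Sigma$ at $\hat{\bm{\mu}}$ of the same smaller order, contradicting minimality unless every coefficient difference vanishes. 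Hence the coefficients of (6), and with them every IO-identifiable function, are identifiable, completing the proof.
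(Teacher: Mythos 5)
Your strategy has a genuine gap at its very first step: the Cramer-rule relations~\eqref{eq:io_lincomp} are \emph{not} a full set of input-output equations under the hypothesis of Main Result~\ref{thm:main2}, so the rest of the argument has no foundation. Minimality of~\eqref{eq:io_lincomp} in $y_i$ is exactly what requires the stronger hypothesis of Main Result~\ref{thm:main3} (strong connectivity plus at least one input), via \cite[Proposition~3.19]{Meshkat18}, and the paper explicitly warns (citing \cite[Remark~3.11]{Meshkat18}) that using~\eqref{eq:io_lincomp} in place of the minimal relations can give wrong answers in general. A concrete counterexample to your claim: take two compartments with no edges between them, a leak at each, and output $y_1=x_1$. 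Every vertex reaches a leak, so the hypothesis of Main Result~\ref{thm:main2} holds, yet $\det(\partial I - A)=(\partial+a_{01})(\partial+a_{02})$ has order $2$ while the minimal relation $y_1'+a_{01}y_1=0$ has order $1$. Your proposed ``dictionary'' identifying a common factor with a coordinate subspace of vertices reaching neither a leak nor an input is therefore false as stated (invariant subspaces of $A(G)^{T}$ need not be coordinate subspaces, and the obstruction here is lack of strong connectivity, not lack of reachable leaks). Separately, the concluding ``routine Wronskian-type nondegeneracy argument'' is not routine for multiple outputs: Section~\ref{sec:ex_nonsolvable} exhibits a multi-output linear system where precisely this Wronskian matrix is rank-deficient, and the entire difficulty of the theorem is to show this cannot happen under the stated graph condition. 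The paper's proof for the single-output case (Lemma~\ref{lem:alglemma}) does use a Wronskian argument, but it does not extend to several outputs without substantial new input.

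The paper's actual route is entirely different and avoids both issues. It invokes a criterion from the literature reducing ``identifiable $=$ IO-identifiable'' to the absence of a rational first integral of $\Sigma$, i.e., to $C(K)=\CC(\bm{\mu})$ for $K=\Frac(\CC(\bm{\mu})\{\mathbf{x},\mathbf{y},\mathbf{u}\}/I_\Sigma)$. It then shows (Lemma~\ref{lem:fconst} plus a Lie-derivative computation) that any first integral would survive replacing every input by a leak, producing a modified graph $G^\ast$ in which every vertex reaches a leak; Lemma~\ref{lem:eigen} shows via a spanning-forest argument that the eigenvalues of $A(G^\ast)$ are algebraically independent over $\Q$, which rules out rational first integrals. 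If you want to salvage your approach, you would need to work with a genuine full set of input-output equations (Definition~\ref{def:ioeq}) rather than~\eqref{eq:io_lincomp}, and you would still face the multi-output solvability problem head-on.
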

\begin{problem}\label{prob:thm2}
Will 
Main Result~\ref{thm:main2} remain true if the condition on the graph is removed or relaxed? 
\end{problem}

In other words, 
 Main Results~\ref{thm:main1} and~\ref{thm:main2} provide classes of models for which the approach via input-output equations outlined in the introduction gives the correct result.

\begin{thm}[see Theorem~\ref{thm:compcoeff}]\label{thm:main3} 
  For a linear compartment model
   with at least one input and
   whose graph is strongly connected,
  the field of all identifiable functions is generated by the coefficients of equations~\eqref{eq:io_lincomp}.
\end{thm}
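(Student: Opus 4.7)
The plan is to combine Main Result~\ref{thm:main2} with a direct comparison of the coefficients appearing in~\eqref{eq:io_lincomp} against those of a full set of input-output equations. First, since the graph $G$ is strongly connected and there exists at least one input vertex $j_0\in\In$, every vertex has a directed path to $j_0$, so every vertex reaches an input. This places the model under the hypothesis of Main Result~\ref{thm:main2}, which tells us that the field of identifiable functions equals the field $k_{\mathrm{io}}$ of IO-identifiable functions. Letting $k_{\mathrm{tf}}$ denote the subfield of $\mathbb{C}(\bm{\mu})$ generated over $\mathbb{C}$ by the coefficients of~\eqref{eq:io_lincomp} for all $i\in\Out$, the theorem reduces to proving $k_{\mathrm{io}}=k_{\mathrm{tf}}$.

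For the inclusion $k_{\mathrm{io}}\subseteq k_{\mathrm{tf}}$, I would argue inductively in the output ordering used in Definition~\ref{def:ioeq}. For each $\ell$, the minimal input-output polynomial $p_\ell$ (viewed as a differential polynomial in $y_\ell$ modulo $y_1,\ldots,y_{\ell-1}$ and $\mathbf{u}$) can be obtained from~\eqref{eq:io_lincomp} with $i=\ell$ by reducing modulo the previously constructed $p_1,\ldots,p_{\ell-1}$ and by factoring out the greatest common left divisor of $\det(\partial I-A)$ with the resulting operators acting on the $u_j$. Since the starting data from~\eqref{eq:io_lincomp} and the GCD computation are carried out entirely inside $k_{\mathrm{tf}}[\partial]$, the resulting $p_\ell$ has all its coefficients in $k_{\mathrm{tf}}$.

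For the reverse inclusion $k_{\mathrm{tf}}\subseteq k_{\mathrm{io}}$, the goal is to show that $\det(\partial I-A)$ and the transfer-function numerators $(-1)^{i+j}\det(M_{ji})b_j/c_i$ are already recoverable from the minimal IO polynomials. Strong connectivity of $G$ together with the existence of an input ensures that the pair $(A,e_{j_0})$ is controllable, so the Krylov vectors $\{A^k e_{j_0}\}_{0\leqslant k<n}$ span $\mathbb{C}^n$. This is the structural ingredient I would use to show that, after combining the minimal IO equations across all outputs with the forcing by $u_{j_0}$, the characteristic polynomial $\det(\partial I-A)$ emerges as their least common left multiple, with the numerators then determined by matching coefficients. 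The main obstacle will be rigorously ruling out any hidden cancellation between $\det(\partial I-A)$ and the minors $\det(M_{ji})$ for a strongly connected graph; to make this elimination-theoretic argument precise I would exploit the refined input-output formula of~\cite[Theorem~3.8]{Meshkat18}, which (as the excerpt notes) coincides with~\eqref{eq:io_lincomp} under our hypotheses and thereby bridges the Cramer-rule representation with a characteristic-set description of the IO ideal.
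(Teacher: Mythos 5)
Your reduction of the theorem to the equality $k_{\mathrm{io}}=k_{\mathrm{tf}}$ via Main Result~\ref{thm:main2} is exactly the paper's closing step (strong connectivity plus an input does imply every vertex reaches an input), so the skeleton is right. But both inclusions as you sketch them have genuine gaps. For $k_{\mathrm{tf}}\subseteq k_{\mathrm{io}}$, the entire content is the coprimality statement you yourself flag as ``the main obstacle'': that $\det(\partial I-A)$ and the nonzero operators $(-1)^{i+j}\det(M_{ji})b_j/c_i$ have trivial gcd. You do not prove it; the controllability/Krylov sketch is never developed into an argument, and the paper simply imports this fact from \cite[Proposition~3.19]{Meshkat18}. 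Granting coprimality, the paper's route is short and does not need a least common left multiple across outputs: ordering the outputs so that $y_i$ is smallest, the first minimal IO equation $p_1$ involves only $y_i$ and $\mathbf{u}$, so $h_i=D_0p_1$ for some operator $D_0\in\CC(\bm{\mu})[\partial]$, and coprimality forces $\ord D_0=0$; hence $h_i$ is proportional to $p_1$ and its coefficients are IO-identifiable.

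For $k_{\mathrm{io}}\subseteq k_{\mathrm{tf}}$, your ``reduce $h_\ell$ modulo $p_1,\dots,p_{\ell-1}$ and factor out a left gcd'' procedure does not produce the minimal equations for $\ell\geqslant 2$: $h_\ell$ involves only $y_\ell$ and $\mathbf{u}$, so there is nothing to reduce, while the minimal $p_\ell$ (for the ordering $y_1<\dots<y_m$) has order $0$ in $y_\ell$ and order up to $n-1$ in $y_1$ --- it is neither a reduction nor a right factor of $h_\ell$. The paper instead (i) pins down the shape of the minimal set by a transcendence-degree count ($\ord_{y_1}p_1=n$ and $\ord_{y_i}p_i=0$ for $i\geqslant 2$), (ii) derives the Bezout-type identity $D(P_i)_j+D_i(Q_1)_j=-(Q_i)_j$ relating the unknown coefficients of $p_i$ to those of the $h$'s, and (iii) invokes a uniqueness lemma for such decompositions under the degree bound $\deg D_i<\deg D$ (Lemma~\ref{lem:poly}, upgraded to a field-of-definition statement in Corollary~\ref{cor:poly} by a Galois automorphism argument) to conclude that the coefficients of $D_i$ and $(P_i)_j$ lie in the field generated by those of $D$, $(Q_1)_j$, and $(Q_i)_j$. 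Step (iii) is the mechanism your sketch is missing: even if some procedure over $\CC(\bm{\mu})$ recovers $p_\ell$, you still need an argument that its output is defined over the subfield $k_{\mathrm{tf}}$, and that is precisely what the coprimality-plus-uniqueness lemma delivers.
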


This theorem combined with Lemma~\ref{lem:trans_io} yields:

\begin{corollary}\label{cor:trans}
  For a linear compartment model satisfying the assumptions of Main Result~\ref{thm:main3},
  the field of all identifiable functions is generated by the coefficients of the entries of the transfer function matrix (see Section~\ref{sec:transfer} in Appendix).
\end{corollary}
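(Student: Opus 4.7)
The plan is to chain the two preceding results, exactly as the sentence immediately before the corollary announces. By Main Result~\ref{thm:main3}, under the hypotheses that the graph $G$ is strongly connected and has at least one input, the field of all identifiable functions coincides with the subfield of $\mathbb{C}(\bm{\mu})$ generated (over $\mathbb{C}$) by the coefficients of the equations~\eqref{eq:io_lincomp}, viewed as differential polynomials in $\mathbf{y}$ and $\mathbf{u}$ and normalized so that $\det(\partial I - A)$, which is monic in $\partial$ by construction, remains monic. It therefore suffices to show that this subfield equals the subfield generated by the coefficients of the entries of the transfer function matrix $H(s) = C(sI-A)^{-1}B$, and this is precisely what Lemma~\ref{lem:trans_io} asserts. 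Composing the two field equalities then yields the corollary.

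To see why this identification is natural, replace $\partial$ by $s$ in~\eqref{eq:io_lincomp} and divide through by the monic polynomial $\det(sI-A)$; the $i$-th equation becomes the scalar identity
\[
Y_i(s) = \sum_{j \in \In} \frac{c_i\, b_j\, (-1)^{i+j}\, \det M_{ji}(sI - A)}{\det(sI - A)}\, U_j(s).
\]
Thus the coefficients appearing in~\eqref{eq:io_lincomp}, up to the nonzero units $c_i \in \mathbb{C}(\bm{\mu})\setminus\{0\}$, are exactly the coefficients of the common denominator $\det(sI-A)$ and of the numerators $c_i b_j(-1)^{i+j}\det M_{ji}(sI-A)$ of the $(i,j)$ entries of $H$. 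The two generating sets therefore visibly produce the same subfield of $\mathbb{C}(\bm{\mu})$, up to bookkeeping about which normalization one chooses.

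The only genuinely delicate point, which is what Lemma~\ref{lem:trans_io} is designed to absorb, is the normalization of the entries of $H$: if each $H_{ij}(s)$ is reduced to lowest terms independently, then potential cancellations in $s$ between $\det M_{ji}(sI-A)$ and $\det(sI-A)$ could in principle shrink the field generated by the reduced coefficients. One must therefore verify that these cancellations do not lose information, so that the field generated by the entries of $H$ (taken in the canonical lowest-terms form) still equals the field generated by the coefficients of~\eqref{eq:io_lincomp}. The strongly-connected assumption on $G$ together with the presence of at least one input is exactly what guarantees this, and it is packaged into Lemma~\ref{lem:trans_io}. Granted that lemma, the present corollary is a one-line concatenation of field equalities.
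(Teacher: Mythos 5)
Your proposal is correct and follows exactly the paper's route: the corollary is obtained by composing Main Result~\ref{thm:main3} (identifiable functions are generated by the coefficients of~\eqref{eq:io_lincomp}) with Lemma~\ref{lem:trans_io} (those coefficients and the coefficients of the transfer function entries generate the same subfield of $\mathbb{C}(\bm{\mu})$). Your added remarks correctly locate the only delicate point --- possible cancellation when each entry of $H$ is put in lowest terms --- in Lemma~\ref{lem:trans_io}, which handles it via the coprimality supplied by strong connectivity and the presence of an input.
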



\section{Examples}\label{sec:illust_ex}

\subsection{How identifiability methods could make mistakes}\label{sec:ce}
In this section, we will consider several examples to illustrate 
how the methods based on IO-equations, formula~\eqref{eq:lincompq}, and transfer functions may lead to incorrect conclusions about identifiability.  This is to make the reader more aware of the conditions in our main results.

\subsubsection{Failure to detect non-identifiability with multiple outputs using IO-equations}\label{sec:ex_nonsolvable}

We will discuss a simple example of a linear system 
such that the classical method of IO-equations will not be able to decide on the (non-)identifiability of 2 of the 3 parameters.
The example will also show that the condition of having only one output cannot be removed from our Main~Result~\ref{thm:main1}.

We begin with an ODE for radioactive decay
$x'=-ax$,  with $a$ being an unknown decay rate. 
Suppose now that we have an unknown constant inflow $b$, and so
$x' = -ax+ b$.
Consider the following output (e.g., the radiation level):
$y = cx$,
in which the unknown parameter $c$ represents the properties of the medium between the observer and the radioactive species.

Suppose now that there is a known fixed  outflow $w$ (e.g., through a hole of fixed size),
and so the ODE model becomes
\begin{equation}\label{ex:forthm1}
\begin{cases}
x' = -ax +b - w\\
w' = 0\\
y_1 = cx,\ \ 
y_2 = w
\end{cases}
\end{equation}
We then have 
\begin{equation}\label{eq:IO}
\begin{gathered}y_1' = cx' = -cax +cb - cw=-ay_1 + cb - cy_2\\
y_2'=w'=0,
\end{gathered}
\end{equation}
which can be shown to be a full set of input-output equations.
To check the solvability condition, consider system~\eqref{eq:solvability_example}:
\begin{equation}
\label{eq:forthm1solv}
\begin{cases}
y_1'(t_1) = -ay_1(t_1) + cb - cy_2(t_1)\\
y_1'(t_2) = -ay_1(t_2) + cb - cy_2(t_2)\\
y_1'(t_3) = -ay_1(t_3) + cb - cy_2(t_3),
\end{cases}
\end{equation}
which we consider as a linear system in 
$a$, $cb$, and $c$. 
The matrix of the system is
\[A := \begin{pmatrix}
-y_1(t_1) & 1 & -y_2(t_1)\\
-y_1(t_2) & 1 & -y_2(t_2)\\
-y_1(t_3) & 1 & -y_2(t_3)
\end{pmatrix}\]
Since $y_2$ is a constant, the second and third columns of the matrix are proportional.
Therefore, system~\eqref{eq:forthm1solv} has infinitely many solutions for the corresponding coefficients of the input-output equations, $cb$ and $b$. 
Hence, the matrix is rank-deficient and the solvability condition is not satisfied. 
Therefore, proceeding further with trying to check the identifiability of $b$ and $c$ based just on~\eqref{eq:forthm1solv} could (and will for this example, as we will see) cause an incorrect conclusion as the validity of this method is currently guaranteed under the solvability condition.

For example, the software DAISY (which is based on input-output equations) applied to this model concludes that all of $a$, $b$, and $c$ are globally identifiable. However, neither $b$, nor $c$ is even locally identifiable. This can be seen, e.g., by noticing that the following is an
output-preserving transformation
of system~\eqref{ex:forthm1} for all non-zero $k$:
\begin{align*}
x \to kx,\quad 
c \to \frac{c}{k},\quad 
b \to kb + w - kw.
\end{align*}  Therefore, \hyperlink{assumption}{Assumption~(A)} is not satisfied.
For an analogous example without constant states, see~\cite[Remark~2.15]{Hong}. 

\subsubsection{The transfer function method and generic initial conditions (see also~\cite[Example 10.6]{DiStefanoBook})}
Consider the linear compartment model
\begin{center}
\begin{tikzpicture}[scale=0.8]
    \node (input) at (0, 0) [draw=none, font=\large] {$u$};
    \node (1) at (3, 0) [circle, draw=black, font=\LARGE] {1};
    \node (3) at (6, 0) [circle, draw=black, font=\LARGE] {3};
    \node (2) at (9, 0) [circle, draw=black, font=\LARGE] {2};
    \node (output) at (1.5, 1.5) [circle, draw=black] {};
    \draw[decoration={markings,mark=at position 1 with
    {\arrow[scale=4,>=stealth]{>}}},postaction={decorate}] (input) -- (1);
    \draw[decoration={markings,mark=at position 1 with
    {\arrow[scale=4,>=stealth]{>}}},postaction={decorate}] (1) -- (3) node[below, pos=.4, font=\large] {$a_{31}$};
    \draw[decoration={markings,mark=at position 1 with
    {\arrow[scale=4,>=stealth]{>}}},postaction={decorate}] (2) -- (3) node[below, pos=.3, font=\large] {$a_{32}$};
    \draw[decoration={markings,mark=at position 1 with
    {\arrow[scale=4,>=stealth]{}}},postaction={decorate}] (output) -- (1);
\end{tikzpicture}
\end{center}
in which an input function $u$ is applied to compartment 1, the quantity in compartment 1 is measured, and material flows from compartment 1 to compartment 3 and from compartment 2 to compartment 3.  The corresponding system is
\begin{equation}\label{eq:eqnforex35}
\begin{gathered}
\begin{pmatrix} x_1 \\ x_2 \\ x_3\end{pmatrix}' = \begin{pmatrix}-a_{31} & 0 & 0 \\ 0 & -a_{32} & 0 \\ a_{31} & a_{32} & 0\end{pmatrix}\begin{pmatrix} x_1 \\ x_2 \\ x_3\end{pmatrix} + \begin{pmatrix}u \\ 0 \\ 0\end{pmatrix} \\ y_1 = x_1.
\end{gathered}
\end{equation}
Since the system satisfies the hypothesis of Theorem~\ref{thm:main1}, we can find the field of identifiable functions, $\mathbb{C}(a_{31})$, by looking at the input-output equation:
\[
  y_1' + a_{31}y_1 - u = 0.
\]
The transfer function (see Section~\ref{sec:transfer} in Appendix) for the system is $\frac{1}{s + a_{31}}$, so the transfer function method gives the same correct result although the initial conditions are not zero but generic  and the assumptions of Theorem~\ref{thm:main3} are not satisfied.
However, using transfer function will lead to erroneous results for this model if we move the output to compartment 3 (that is, replace $y_1 = x_1$ with $y_1 = x_3$).
In this case, the transfer function is $\frac{a_{31}}{s(s + a_{31})}$, indicating that $a_{32}$ is not identifiable.
However, it actually is identifiable.
This can be shown again by using Algorithm~\ref{cor:algorithm1} (as the assumption of Theorem~\ref{thm:main1} is still satisfied), that is, considering the following input-output equation:
\[
y_1''' + (a_{31} + a_{32})y_1'' + a_{31}a_{32}y_1' - a_{31}a_{32}u  - a_{31}u'.
\]
 Thus, the hypotheses of Corollary~\ref{cor:trans} cannot be omitted.
Note that Algorithm~\ref{alg:compartment} will give a correct result for this case even though the assumptions of Theorem~\ref{thm:main3} are not satisfied.


\subsection{Positive examples for applying our theory}
Below we give examples from the literature satisfying at least one of the sufficient conditions from our main results.
\subsubsection{Kinetics of lead in humans and our 
results
for one output.}
The following system of equations is used in \cite[Section~4A]{LinCompBook} to model the kinetics of lead in the human body:
\begin{equation*}
\begin{cases}
x_1' = k_1x_1+k_2x_2+k_3x_3+k_4 \\
x_2' = k_5x_1+k_6x_2 \\
x_3' = k_7x_1-k_3x_3 \\
y_1 = x_1
\end{cases}
\end{equation*}
A full set of input-output equations is unique in this case and consists of a single differential polynomial:
\begin{multline*}
y_1'''-(k_1+k_3+k_6)y_1'' + (-k_1k_3+k_1k_6-k_2k_5-k_3k_6-k_3k_7)y_1' \\+ (k_1k_3k_6-k_2k_3k_5+k_3k_6k_7)y_1 + k_3k_4k_6.
\end{multline*}
By Corollary~\ref{cor:algorithm1} (condition~\ref{cond:one}), the field of identifiable functions is generated by \begin{gather*}k_1+k_3+k_6,\ \ -k_1k_3+k_1k_6-k_2k_5-k_3k_6-k_3k_7,\\ k_3(k_1k_6-k_2k_5+k_6k_7),\ \ k_3k_4k_6.
\end{gather*}
In other words, these parameter combinations are identifiable, and moreover any other identifiable combination of parameters can be written as a rational combination of these.
\subsubsection{Hepatobiliary kinetics of bromosulfophthalein}
The following linear compartment model is taken from \cite[Section~6.3]{W1982}:
\[
\begin{cases}
x_1' = -k_{31}x_1 + k_{13}x_3 + u\\
x_2' = -k_{42}x_2 + k_{24}x_4\\
x_3' = k_{31}x_1 - (k_{03} + k_{13} + k_{43})x_3\\
x_4' = k_{42}x_2 + k_{43}x_3 - (k_{04} + k_{24})x_4\\
y_1 = x_1,\\ y_2 = x_2.
\end{cases}
\]
A full set of IO-equations is 
too large to display here but their coefficients are
\begin{equation}\label{eq:gens}
k_{13},\ \ k_{31},\ \ k_{04}k_{42},\ \ k_{24}k_{43},\ \  k_{03}+k_{43},\ \ k_{04}+k_{24}+k_{42}.
\end{equation} 
Hence, 
by Corollary~\ref{cor:algorithm1}, 
the field of identifiable functions is generated by~\eqref{eq:gens}. 
This refines the analysis performed in
\cite[Example~3]{MED2009}, where it was shown that~\eqref{eq:gens} generate the field of IO-identifiable functions (as we have seen, for some examples, there are IO-identifiable functions that are not identifiable).

\subsubsection{Cyclic model} 
The following model can be obtained from \cite[Model~M]{GC1990} by adding extra leaks and an output for a better illustration of the computation in connection to our results: 
\[
\begin{cases}
x_1' = a_{13} x_3 - a_{21} x_1 - a_{01}  x_1\\
x_2' = a_{21} x_1 - a_{32} x_2 - a_{02} x_2 + u\\
x_3' = a_{32} x_2 - a_{13} x_3\\
y_1 = x_1,\ \ 
y_2 = x_2
\end{cases}
\]
Using the coefficients of a full set of IO-equations or of the transfer function matrix, we obtain by Corollaries~\ref{cor:algorithm1} and~\ref{cor:trans} that the field of identifiable functions is generated by:
\begin{gather*}
a_{21},\ \ (a_{01} + a_{21})a_{13},\ \ 
a_{01} + a_{13},\ \ 
a_{13}a_{32},\ \ 
a_{02} + a_{32}.
\end{gather*}

\section{Proofs}\label{sec:proofs}
\subsection{``$\text{Identifiability} \iff \text{IO-identifiability}$'' for linear systems with one output (proof of Theorem~\ref{thm:main1})}\label{sec:proofs_one_out}

In this section, we prove one of the main results, Theorem~\ref{thm:linearIO}, which shows that, for a linear system with one output, IO-identifiability and identifiability are equivalent. We begin with showing a preliminary result.

\begin{lemma}\label{lem:alglemma}
  Let $K$ be a field. Consider
  \begin{itemize}[leftmargin=7.5mm,leftmargin=4mm]
  \item the differential polynomial ring $K\{y,\mathbf{u}\}$ with derivation $\partial$ satisfying $\partial (K) = 0$,
  \item  $P \in K\{y, \mathbf{u}\}$ of the form  $P = D_P(y) + U_P$,
  where $D_P \in K[\partial]$ is a linear differential operator over $K$ with leading coefficient $1$ and $U_P \in K\{\mathbf{u}\}$.
  \end{itemize}
  Let $W$ be the Wronskian of all the monomials of $P$ except for the one of the highest order with respect to $y$.
  Then $W  \notin [P]$.
\end{lemma}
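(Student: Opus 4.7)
My plan is to construct a differential field $F$ together with a differential ring homomorphism $\phi \colon K\{y, \mathbf{u}\} \to F$ such that $\phi(P) = 0$ while $\phi(W) \neq 0$. Since $\ker \phi$ is automatically a differential ideal containing $P$, we have $\ker \phi \supseteq [P]$, so $\phi(W) \neq 0$ forces $W \notin [P]$.

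First I would show that $R := K\{y, \mathbf{u}\}/[P]$ is an integral domain: using $P = 0$ to express, inductively, each $y^{(r)}$ with $r \geq d$ as a $K\{\mathbf{u}\}$-affine combination of $y, y', \ldots, y^{(d-1)}$, one obtains a ring isomorphism $R \cong K\{\mathbf{u}\}[y, y', \ldots, y^{(d-1)}]$. Take $F := \Frac(R)$ with the inherited derivation, and let $\phi$ be the composite $K\{y, \mathbf{u}\} \twoheadrightarrow R \hookrightarrow F$. An induction on the highest-indexed generator $u_\ell^{(j)}$ (or $y^{(s)}$) appearing in a putative nonconstant element of $\ker \partial$ establishes $C_F = K$; the key point is that differentiating introduces either a strictly-larger-index generator or, at the boundary $y^{(d-1)}$, fresh $\mathbf{u}$-dependence coming from $-U_P$.

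Given $C_F = K$, the classical Wronskian criterion (functions in a differential field are $C_F$-linearly dependent iff their Wronskian vanishes) reduces the problem to showing that the monomials of $P$ other than $y^{(d)}$---that is, the $y^{(s)}$ with $s < d$ appearing in $D_P$ together with the monomials $m_\ell$ appearing in $U_P$---are $K$-linearly independent in $F$. This follows from a bidegree argument: the $m_\ell$ are distinct monomials in the algebraically independent $u_\ell^{(j)}$, hence $K$-linearly independent inside $K\{\mathbf{u}\}$; the $y^{(s)}$ are among the variables of the polynomial ring $R$, hence $K$-linearly independent modulo $K\{\mathbf{u}\}$; and grading by total degree in $y^{(\cdot)}$ separates the two contributions. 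The main technical hurdle I anticipate is the verification that $C_F = K$---once that is secured, the independence statement together with the Wronskian criterion closes the argument.
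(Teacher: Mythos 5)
Your opening moves (the quotient $K\{y,\mathbf{u}\}/[P]$ is a domain isomorphic to $K\{\mathbf{u}\}[y,\ldots,y^{(d-1)}]$, pass to its fraction field $F$, apply the Wronskian criterion) coincide with how the paper's proof begins. The gap is your claim that the constants of $F$ equal $K$. This is false in general, and the failing case is not exotic: the lemma allows $U_P \in K\{\mathbf{u}\}$ to be a constant or zero (this actually happens in the paper's applications, e.g., the lead-kinetics example, where $U_P = k_3k_4k_6 \in K$), and then your induction has no ``fresh $\mathbf{u}$-dependence'' to invoke at the boundary variable $y^{(d-1)}$. Concretely, for $P = y'' + y$ the element $y^2 + (y')^2$ is a nonconstant-in-$K$ constant of $F$, since $(y^2+(y')^2)' = 2yy' + 2y'y'' = 2yy' - 2y'y = 0$ in the quotient; the same phenomenon occurs whenever the linear vector field on $K[y,\ldots,y^{(d-1)}]$ determined by $D_P$ admits a rational first integral (for instance, whenever the roots of $D_P$ satisfy an integer resonance relation). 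Once $C(F) \supsetneq K$, the Wronskian criterion requires linear independence of the monomials over $C(F)$, and your bidegree argument only yields independence over $K$, which is strictly weaker; so the proof does not close.

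The paper's proof is structured precisely to avoid any assumption on the constant field. It takes the hypothetical $C(L)$-linear dependence, packages it as a relation $Q = D_Q(y) + U_Q$ with $D_Q \in C(L)[\partial]$ monic of order less than $\ord D_P$, forms $D_0 = \gcd(D_P, D_Q)$ (after first replacing $K$ by its algebraic closure so that $D_0 \in K[\partial]$), and then shows by a descent on a $K$-basis of $C(L)$ that the resulting relation $R = D_0(y) + U_R$ in fact has $U_R \in K\{\mathbf{u}\}$; this produces an element of $[P] \cap K\{y,\mathbf{u}\}$ of order in $y$ lower than $\ord D_P$, contradicting the fact that $P, P', P'', \ldots$ form a Gr\"obner basis of $[P]$. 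To salvage your route you would need either to prove $C(F)$-linear independence of the monomials directly, or to add the hypothesis that $U_P$ genuinely involves $\mathbf{u}$ --- which would not suffice for the paper's intended applications.
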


\begin{proof}
Since the coefficients of $P$ and $W$ are in $K$, the membership $W \in [P]$ would be the same considered over $K$ or its algebraic closure.
Hence, replacing $K$ with its algebraic closure if necessary, we 
assume that $K$ is algebraically closed. 

Consider a lexicographic monomial ordering induced by
an ordering of the variables such that $y^{(i + 1)} > y^{(i)}$ for every $i \geqslant 0$ and $y$ is greater than any derivative of $\mathbf{u}$.
Since for all $r$ $P, P', \ldots, P^{(r)}$ is a Gr\"obner basis for \[[P] \cap K[y,y',\ldots,y^{(r)},\mathbf{u},\mathbf{u}',\ldots,\mathbf{u}^{(r)}],\] it follows from \cite[Lemma 1.5]{Iima2009} that $P, P', \ldots$ form a Gr\"obner basis of $[P]$ with respect to 
this ordering as defined by \cite[Definition 1.4]{Iima2009}.

Since the leading terms of a Gr\"obner basis are linear, $[P]$ is a prime ideal. 
Thus, we can introduce $L := \operatorname{Frac}(K\{y, \mathbf{u}\} / [P])$.
Denote the field of constants of $L$  by $C(L)$
and the images of $y$ and $\mathbf{u}$ in $L$ by $\bar{y}$ and $\bar{\mathbf{u}}$, respectively.
Since none of derivatives of $\mathbf{u}$ appear in the leading terms of the Gr\"obner basis, $\bar{\mathbf{u}}$ and their derivatives are algebraically independent over $K$.

  Assume that the statement of the lemma is not true.
  Due to~\cite[Theorem 3.7, p. 21]{Kap}, this implies that the images in $L$ of the monomials of $P$ except for the 
  one of the highest order in $y$
  are linearly dependent over $C(L)$.
  Therefore, there exists a nonzero polynomial \[Q  = D_Q(y) + U_Q,\] where $D_Q \in C(L)[\partial]$ is monic and $U_Q \in C(L)\{\mathbf{u}\}$, such that  $Q(\overline{y}, \overline{\mathbf{u}}) = 0$ and $\ord D_Q < \ord D_P$.
  Let $D_0$ be the gcd of $D_P$ and $D_Q$ with the leading coefficient 1. 
  Then $\operatorname{ord} D_0 < \operatorname{ord} D_P$.
  
If $F$ is an algebraically closed field, $p \in F[X]$, and $p$ is divisible by a $q \in E[X]$ with the leading coefficient 1, where $E$ is an extension of $F$, then $q \in F[X]$.
  Hence, as $D_0$ divides $D_P$ and $K$ is algebraically closed, $D_0 \in K[\partial]$ and there is $D_1 \in K[\partial]$ such that $D_P = D_1D_0$.
  There also are $A, B \in C(L)[\partial]$ such that \[D_0 = AD_P + BD_Q.\]
  Consider
    $R := A(P) + B(Q) = D_0(y) + U_R$,
  where $U_R = A(U_P) + B(U_Q)$.
  Then $R(\overline{y}, \overline{\mathbf{u}}) = 0$.
  Since $P - D_1(R) \in C(L) \{\mathbf{u}\}$ vanishes on $\overline{\mathbf{u}}$ and $\overline{\mathbf{u}}$ is differentially independent over $C(L)$, it follows that $P = D_1(R)$.
  
  Considering a basis of $C(L)$ over $K$, we can write
  \[
  U_R = U_0 + e_1 U_1 + \ldots + e_N U_N,
  \]
  where $U_0, \ldots, U_N \in K\{\mathbf{u}\}$ and $1, e_1, e_2, \ldots, e_N \in C(L)$ are linearly independent over $K$.
  Since $D_1(U_R) = U_P$ and $D_1 \in K[\partial]$, $U_1, \ldots, U_N \in \ker D_1$, where we consider $D_1$ as a function from $C(L)\{y,\mathbf{u}\}$ to $C(L)\{y,\mathbf{u}\}$.
  There are two cases:
  \begin{itemize}[leftmargin=7.5mm,leftmargin=4mm]
      \item $D_1$ is not divisible by $\partial$.
      Then $\ker D_1 = \{0\}$.  Hence, \[U_1 = \ldots = U_N = 0.\]
      \item $D_1$ is divisible by $\partial$.
      Then $\ker D_1 = C(L)$.
      Thus, $U_1, \ldots, U_N\in K$.
      However, since $U_P = D_1(U_R)$, $U_P$ does not contain a term in $K$. Hence, $U_Q$ does not contain a term in $C(L)$ and, consequently, $U_R$ does not contain a term in $C(L)$.
      Thus, \[U_1 = \ldots = U_N = 0.\]
  \end{itemize}
  In both cases, we have shown that $U_R \in K\{\mathbf{u}\}$.
  Thus, $R \in K\{y, \mathbf{u}\}$ and $R \in [P]$.
  But this is impossible because $P, P', P'', \ldots$ is a Gr\"obner basis of $[P]$ with respect to the monomial ordering introduced in the beginning of the proof, and $\operatorname{ord} D_0 < \operatorname{ord} D_P$, so $R$ is not reducible with respect to this basis.
\end{proof}

\begin{theorem}[Main Result~\ref{thm:main1}]\label{thm:linearIO}
For every $\Sigma$ as in~\eqref{eq:sigma_lin} with $m = 1$ (that is, single output), for all $h \in \CC(\bm{\mu})$, 
\begin{displayquote}\begin{center}
$h$ \text{ is identifiable } $\iff$\ $h$ \text{ is IO-identifiable}. \end{center}
\end{displayquote}
\end{theorem}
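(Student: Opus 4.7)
The inclusion ``identifiable $\Rightarrow$ IO-identifiable'' is already recorded in~\eqref{eq:identincl}, so the task reduces to showing every IO-identifiable function is identifiable. Since both the identifiable and IO-identifiable functions form subfields of $\mathbb{C}(\bm{\mu})$, it suffices to verify identifiability of each coefficient of the single input-output equation $P$ of Definition~\ref{def:ioeq}. Writing $P = y^{(r)} + \sum_{i=1}^N c_i(\bm{\mu})\, M_i(y,\mathbf{u})$, where $M_1,\ldots,M_N$ are the remaining differential monomials, the goal becomes showing that each $c_i$ is identifiable.

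The core algebraic step is a subtraction trick. Fix a generic $(\hat{\mathbf{x}}^\ast,\hat{\bm{\mu}},\hat{\mathbf{u}})$ and let $y = Y(\hat{\mathbf{x}}^\ast,\hat{\bm{\mu}},\hat{\mathbf{u}})$. If $(\tilde{\mathbf{x}}^\ast,\tilde{\bm{\mu}},\hat{\mathbf{u}})$ produces the same output $y$, then specializing $P$ at both parameter tuples gives two equations which both vanish on $(y,\hat{\mathbf{u}})$. Both have leading coefficient $1$ on $y^{(r)}$ by construction, so their difference has no $y^{(r)}$ term and yields the $\mathbb{C}$-linear relation
\[
\sum_{i=1}^N \bigl(c_i(\hat{\bm{\mu}}) - c_i(\tilde{\bm{\mu}})\bigr)\, M_i(y,\hat{\mathbf{u}})(t) \equiv 0
\]
between analytic functions of $t$ near $0$. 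Such a relation forces $c_i(\hat{\bm{\mu}}) = c_i(\tilde{\bm{\mu}})$ for all $i$ once the functions $M_i(y,\hat{\mathbf{u}})$ are $\mathbb{C}$-linearly independent, which in turn is equivalent to the non-vanishing of their Wronskian $W\bigl(M_1(y,\hat{\mathbf{u}}),\ldots,M_N(y,\hat{\mathbf{u}})\bigr)$ at some point near $t = 0$.

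To obtain generic non-vanishing of this Wronskian, I would invoke Lemma~\ref{lem:alglemma} with $K = \mathbb{C}(\bm{\mu})$: it says exactly that $W \notin [P]$ as differential polynomials in $y,\mathbf{u}$. Promoting this to ``$W$ does not vanish on the generic trajectory of $\Sigma$'' requires the identification $I_\Sigma \cap \mathbb{C}(\bm{\mu})\{y,\mathbf{u}\} = [P]$, which in the single-output linear setting follows from generic observability: the state vector $\mathbf{x}$ can be written as a $\mathbb{C}(\bm{\mu})$-linear combination of $y,y',\ldots,y^{(r-1)}$ and derivatives of $\mathbf{u}$, so every elimination relation is a differential consequence of $P$. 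From $W \notin I_\Sigma$ one concludes that $W(y,\hat{\mathbf{u}})|_{t=0}$, viewed as a polynomial expression in $(\hat{\mathbf{x}}^\ast,\hat{\bm{\mu}})$ and the Taylor coefficients of $\hat{\mathbf{u}}$ at $0$, is not identically zero, which produces the Zariski-open set $\Theta \times U$ required by the definition of identifiability. The step I expect to be most delicate is precisely this bridge from ``$W \notin [P]$'' (pure differential algebra handled by Lemma~\ref{lem:alglemma}) to ``$W$ is nonzero on a Zariski-open set of initial data and inputs'' (an analytic statement about trajectories), needing the elimination-ideal identification above together with careful tracking of how $\hat{\mathbf{x}}^\ast$, $\hat{\bm{\mu}}$, and the jet of $\hat{\mathbf{u}}$ at $0$ parametrize solutions; once this is in place, the remainder is a direct application of the subtraction argument.
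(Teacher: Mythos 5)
Your proposal follows essentially the same route as the paper: reduce to identifiability of the coefficients of the single input-output equation $P$, and deduce it from the generic non-vanishing of the Wronskian of the non-leading monomials of $P$ along trajectories, with Lemma~\ref{lem:alglemma} supplying the key algebraic fact $W \notin [P]$. The paper compresses the entire ``subtraction trick $+$ Wronskian linear-independence $+$ genericity transfer'' portion into a citation of \cite[Lemma~4.6]{ident-compare}, whereas you reconstruct it explicitly; that reconstruction is sound. One caveat: your justification of the identification $I_\Sigma \cap \mathbb{C}(\bm{\mu})\{y,\mathbf{u}\} = [P]$ via ``generic observability'' is not correct as stated --- a single-output linear system need not be observable (e.g.\ $x_1'=-x_1$, $x_2'=-x_2$, $y=x_1$, where $x_2$ cannot be expressed through $y,\ldots,y^{(r-1)}$ and $\mathbf{u}$) --- but the identification itself is true and is exactly part~\ref{prA6:part2} of Proposition~\ref{prop:linear_equiv}, established by a Gr\"obner-basis argument on the elimination ideal that does not require observability.
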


\begin{proof}
 \cite[Theorem~4.2]{ident-compare} implies that identifiable functions are always IO-identifiable, so it remains to show the reverse inclusion.
  Consider a full set of input-output equations for $\Sigma$.
  Since $m = 1$, it will consist of a single linear differential polynomial $p \in \mathbb{C}(\bm{\mu}) \{y, \mathbf{u}\}$.
  Then, Lemma~\ref{lem:alglemma} and 
 \cite[Lemma~4.6]{ident-compare} imply that its coefficients are identifiable, so the reverse inclusion holds as well.
\end{proof}


\subsection{Sufficient condition for ``$\text{identifiability} \iff \text{IO-identifiability}$'' for linear compartment models (proof of Theorem~\ref{thm:main2})}
For the notation, see Section~\ref{sec:lincomp}.
\begin{lemma}\label{lem:fconst}
Let $F = \Frac(\CC(\hyperlink{vars}{\bm{\mu}})\{\hyperlink{vars}{\mathbf{x}},\hyperlink{vars}{\mathbf{y}},\hyperlink{vars}{\mathbf{u}}\}/\hyperlink{ISigma}{I_\Sigma})$.  
The field of constants of $F$ lies in the subfield of $F$ generated by $\CC$, $\bm{\mu}$ and~$\mathbf{x}$.
\end{lemma}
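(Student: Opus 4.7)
The plan has two stages. First, I would give an explicit model of $F$. Using the generators of $I_\Sigma$, I can iteratively eliminate every $y_j^{(k)}$ (via $y_j^{(k)}\equiv g_j^{(k)} \pmod{I_\Sigma}$) and every $x_i^{(k)}$ for $k\geqslant 1$ (via $x_i^{(k+1)}\equiv f_i^{(k)} \pmod{I_\Sigma}$), and then rewrite each $f_i^{(k)}$ and $g_j^{(k)}$ as a polynomial in $\mathbf{x},\mathbf{u},\mathbf{u}',\ldots$ with coefficients in $\CC(\bm{\mu})$. This yields a $\CC(\bm{\mu})$-algebra isomorphism
\[
\CC(\bm{\mu})\{\mathbf{x},\mathbf{y},\mathbf{u}\}/I_\Sigma \;\cong\; \CC(\bm{\mu})[\mathbf{x},\mathbf{u},\mathbf{u}',\mathbf{u}'',\ldots],
\]
the right-hand side being a pure polynomial ring. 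In particular, $I_\Sigma$ is prime and
\[
F \;\cong\; K(\mathbf{u},\mathbf{u}',\mathbf{u}'',\ldots), \qquad K:=\CC(\bm{\mu})(\mathbf{x}),
\]
with all $u_i^{(k)}$ algebraically independent over $K$. Under this identification, the induced derivation $\delta$ of $F$ satisfies $\delta(x_i)=f_i\in K(\mathbf{u})$ and $\delta(u_i^{(k)})=u_i^{(k+1)}$.

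For the second stage, set $L_N := K(\mathbf{u},\ldots,\mathbf{u}^{(N)})$ for $N\geqslant 0$ and $L_{-1}:=K$. Given a constant $c\in F$, pick the least $N$ with $c\in L_N$; I aim to show $N=-1$. Assume for contradiction that $N\geqslant 0$, and write $c=P/Q$ with $P,Q\in L_{N-1}[u_1^{(N)},\ldots,u_\kappa^{(N)}]$ coprime as polynomials in $\mathbf{u}^{(N)}$. By the chain rule and the description of $\delta$ above,
\[
\delta(c) \;=\; \sum_{i=1}^{\kappa}\frac{\partial c}{\partial u_i^{(N)}}\,u_i^{(N+1)} + R,
\]
where $R\in L_N$ collects all contributions not involving $\mathbf{u}^{(N+1)}$ (coming from $\delta$ applied to $\mathbf{x}$ and to $u_i^{(k)}$ with $k<N$, which land in $L_N$). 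Because $u_1^{(N+1)},\ldots,u_\kappa^{(N+1)}$ are algebraically independent over $L_N$, the equation $\delta(c)=0$ forces every $\partial c/\partial u_i^{(N)}$ to vanish. Thus $c\in L_{N-1}$, contradicting the minimality of $N$. Hence $c\in K$, and $K$ is precisely the subfield of $F$ generated by $\CC$, $\bm{\mu}$ and $\mathbf{x}$.

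The only nontrivial point I anticipate is the first stage, namely verifying carefully that the elimination procedure really produces an isomorphism onto the polynomial ring stated (so that the $u_i^{(k)}$ remain algebraically independent in $F$ and $\delta$ takes the displayed form). Once this coordinate description is in place, the ``kill the top derivative'' induction of the second stage is routine, and linearity of $\mathbf{f}$ and $\mathbf{g}$ enters only through making the elimination visibly effective.
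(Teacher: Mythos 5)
Your proposal is correct and follows essentially the same route as the paper: the paper likewise observes that $F$ is generated by $\bm{\mu}$, $\mathbf{x}$, and the derivatives of $\mathbf{u}$ with all of these algebraically independent, and then kills a putative constant $h$ depending on some top derivative $u_\kappa^{(\ell)}$ by noting that $h'$ contains the term $u_\kappa^{(\ell+1)}\,\partial h/\partial u_\kappa^{(\ell)}$ with $u_\kappa^{(\ell+1)}$ transcendental over everything else. The only cosmetic difference is that you organize the descent via a minimal $N$ and vanishing of all partials, while the paper fixes one nonzero partial without loss of generality; your ``stage one'' elimination is exactly the unstated observation the paper opens with.
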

\begin{proof}
Observe that $F$ as a field is generated by $\bm{\mu}$, $\mathbf{x}$, and all the derivatives of $\mathbf{u}$, and all these elements are algebraically independent.
Assume that there exists $\ell \geqslant 0$ and $h \in {\CC}(\bm{\mu}, \mathbf{x}, \mathbf{u}, \ldots, \mathbf{u}^{(\ell)})$ such that $h' = 0$ and, without loss of generality, $\frac{\partial}{\partial u_\kappa^{(\ell)}} h \neq 0$.
Then we have
\begin{gather*}
h' = \sum\limits_{i = 0}^\ell\sum\limits_{r = 1}^\kappa u_r^{(i + 1)} \frac{\partial}{\partial u_r^{(i)}} h + \sum\limits_{j = 1}^n x_j' \frac{\partial}{\partial x_j} h  = u_\kappa^{{(\ell + 1)}} \frac{\partial}{\partial u_\kappa^{(\ell)}} h + a,\\
a \in \mathbb{C}(\bm{\mu}, \mathbf{x}, \mathbf{u}, \ldots, \mathbf{u}^{{(\ell)}}, u_1^{(\ell+1)}, \ldots, u_{\kappa-1}^{(\ell+1)}).
\end{gather*}
Now $h' = 0$ {yields} a contradiction since $u_\kappa^{(\ell + 1)}$ is transcendental over $\mathbb{C}(\bm{\mu}, \mathbf{x}, \mathbf{u}, \ldots, \mathbf{u}^{(\ell)}, u_1^{(\ell+1)}, \ldots, u_{\kappa-1}^{(\ell+1)})$ and $\frac{\partial}{\partial u_\kappa^{(\ell)}} h \neq 0$.
\end{proof}


\begin{lemma}\label{lem:eigen}
Consider a graph $G$ such that, from every vertex, at least one leak can be reached.
Then the eigenvalues of $\hyperlink{AG}{A(G)}$ are distinct and algebraically independent over $\Q$. 
\end{lemma}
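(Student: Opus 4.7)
The plan is to reduce to showing that the $n$ coefficients of the characteristic polynomial $\chi(\lambda) = \det(\lambda I - A(G))$ are algebraically independent over $\Q$. Once this is known, the eigenvalues $\lambda_1, \ldots, \lambda_n$ generate an algebraic extension of $\Q(c_1, \ldots, c_n)$, so they are themselves algebraically independent over $\Q$, and in particular pairwise distinct (any coincidence $\lambda_i = \lambda_j$ would be an algebraic relation). To check algebraic independence of the coefficients it suffices to exhibit a single specialization of the parameters $\bm{\mu}$ under which the specialized coefficients become algebraically independent, since any polynomial relation among the original coefficients would descend to the same relation after specialization.

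To build the specialization, I would use the reachability hypothesis to attach exactly one surviving parameter to each vertex. Let $d(i)$ denote the length of a shortest directed path from $i$ to a leak; this is finite by assumption. For $i \notin \Leak$ choose $\sigma(i) \in V$ with $i \to \sigma(i) \in E$ and $d(\sigma(i)) = d(i)-1$; for $i \in \Leak$ set $\sigma(i) = 0$. Specialize by sending every $a_{ki}$ with $(i,k) \in E$ and $k \neq \sigma(i)$ to zero, and keep the remaining parameters untouched. The single parameter that survives at vertex $i$ is
\[
\mu_i := \begin{cases} a_{\sigma(i), i}, & i \notin \Leak, \\ a_{0i}, & i \in \Leak. \end{cases}
\]
These $n$ parameters are distinct original indeterminates in $\bm{\mu}$, hence algebraically independent over $\Q$.

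After specialization the diagonal of $A(G)$ is $-\mu_i$ in position $(i,i)$, and the only surviving off-diagonal entries are $A(G)_{\sigma(i), i} = \mu_i$ for $i \notin \Leak$. Ordering the vertices by increasing $d$ (leaks first), each nonzero off-diagonal entry sits in a row of smaller index than its column, so the specialized matrix is upper triangular with diagonal $(-\mu_1, \ldots, -\mu_n)$. Its characteristic polynomial is therefore $\prod_{i=1}^{n}(\lambda + \mu_i)$, whose coefficients are, up to sign, the elementary symmetric polynomials in $n$ algebraically independent indeterminates, and hence are themselves algebraically independent over $\Q$. This gives the required independence for the unspecialized coefficients and completes the plan. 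I do not foresee a serious obstacle: the only care needed is verifying that $\sigma$ is well-defined on all of $V$ and that ordering by $d$ yields the claimed upper-triangular form, both of which are immediate from the reachability hypothesis together with the strict inequality $d(\sigma(i)) < d(i)$.
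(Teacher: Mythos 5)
Your proposal is correct and follows essentially the same route as the paper: both reduce to algebraic independence of the characteristic polynomial's coefficients via a specialization that keeps, for each vertex, exactly one outgoing edge along a shortest path to a leak (the paper phrases this as a BFS spanning forest $H$ with source set $\Leak$), yielding an upper-triangular matrix with algebraically independent diagonal entries. The only cosmetic difference is that you identify the coefficients as elementary symmetric polynomials in the surviving parameters, whereas the paper argues via transcendence degree that the coefficients of the characteristic polynomial of $A(H)$ are algebraically independent because the diagonal entries are algebraic over them.
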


\begin{proof}
Let $H$ be a directed spanning forest of $G$ constructed by a breadth-first search (depth-first search would work as well) with the set $\Leak$ as the source such that, from every vertex, there is a path to some element of $\Leak$.
Relabeling vertices if necessary, $A(H)$ is upper triangular with algebraically independent diagonal entries.  
It is well known that a breadth-first search on a graph 
will construct a spanning forest containing all vertices reachable from the source set (cf. \cite[Section 22.2]{Cormen}).  

We illustrate our procedure with an example.  Let $G$ be the graph shown below, with $\Leak = \{1, 6\}$:

\begin{center}
\begin{tikzpicture}
         \node (1) at (0, .7) [circle, draw=black, inner sep=1pt, font=\tiny] {1};
         \node (2) at (1, .7) [circle, draw=black, inner sep=1pt, font=\tiny] {2};
         \node (3) at (2, .7) [circle, draw=black, inner sep=1pt, font=\tiny] {3};
         \node (4) at (0, 0) [circle, draw=black, inner sep=1pt, font=\tiny] {4};
         \node (5) at (1, 0) [circle, draw=black, inner sep=1pt, font=\tiny] {5};
         \node (6) at (2, 0) [circle, draw=black, inner sep=1pt, font=\tiny] {6};
         \draw[decoration={markings,mark=at position 1 with {\arrow[] {>}}},postaction={decorate}] (2.west) -- (1.east);
         \draw[decoration={markings,mark=at position 1 with {\arrow[] {>}}},postaction={decorate}] (3.west) -- (2.east);
         \draw[decoration={markings,mark=at position 1 with {\arrow[] {>}}},postaction={decorate}] (1.south) -- (4.north);
         \draw[decoration={markings,mark=at position 1 with {\arrow[] {>}}},postaction={decorate}] (3.south) -- (6.north);
         \draw[decoration={markings,mark=at position 1 with {\arrow[] {>}}},postaction={decorate}] (4.east) -- (5.west);
         \draw[decoration={markings,mark=at position 1 with {\arrow[] {>}}},postaction={decorate}] (5.east) -- (6.west);
         \draw[decoration={markings,mark=at position 1 with {\arrow[] {>}}},postaction={decorate}] ([xshift=-1mm,yshift=.5mm]2.south) -- ([xshift=-1mm,yshift=-.2mm]5.north);
         \draw[decoration={markings,mark=at position 1 with {\arrow[] {>}}},postaction={decorate}] ([xshift=1mm,yshift=-.5mm]5.north) -- ([xshift=1mm,yshift=.2mm]2.south);
\end{tikzpicture}
\end{center}
The steps of a breadth-first search with source set $\{1, 6\}$ are the first three upper left, upper right, and lower left graphs shown below.  The fourth lower right graph is a relabeling of the third as described above.
\begin{center}
\begin{tikzpicture}
         \node (1) at (0, .7) [circle, draw=black, inner sep=1pt, font=\tiny] {1};
         \node (6) at (2, 0) [circle, draw=black, inner sep=1pt, font=\tiny] {6};
\end{tikzpicture}
\hspace{10mm}
\begin{tikzpicture}
         \node (1) at (0, .7) [circle, draw=black, inner sep=1pt, font=\tiny] {1};
         \node (2) at (1, .7) [circle, draw=black, inner sep=1pt, font=\tiny] {2};
         \node (3) at (2, .7) [circle, draw=black, inner sep=1pt, font=\tiny] {3};
         \node (5) at (1, 0) [circle, draw=black, inner sep=1pt, font=\tiny] {5};
         \node (6) at (2, 0) [circle, draw=black, inner sep=1pt, font=\tiny] {6};
         \draw[decoration={markings,mark=at position 1 with {\arrow[] {>}}},postaction={decorate}] (2.west) -- (1.east);
         \draw[decoration={markings,mark=at position 1 with {\arrow[] {>}}},postaction={decorate}] (3.south) -- (6.north);
         \draw[decoration={markings,mark=at position 1 with {\arrow[] {>}}},postaction={decorate}] (5.east) -- (6.west);
\end{tikzpicture}
\end{center}
\begin{center}
\begin{tikzpicture}
         \node (1) at (0, .7) [circle, draw=black, inner sep=1pt, font=\tiny] {1};
         \node (2) at (1, .7) [circle, draw=black, inner sep=1pt, font=\tiny] {2};
         \node (3) at (2, .7) [circle, draw=black, inner sep=1pt, font=\tiny] {3};
         \node (4) at (0, 0) [circle, draw=black, inner sep=1pt, font=\tiny] {4};
         \node (5) at (1, 0) [circle, draw=black, inner sep=1pt, font=\tiny] {5};
         \node (6) at (2, 0) [circle, draw=black, inner sep=1pt, font=\tiny] {6};
         \draw[decoration={markings,mark=at position 1 with {\arrow[] {>}}},postaction={decorate}] (2.west) -- (1.east);
         \draw[decoration={markings,mark=at position 1 with {\arrow[] {>}}},postaction={decorate}] (3.south) -- (6.north);
         \draw[decoration={markings,mark=at position 1 with {\arrow[] {>}}},postaction={decorate}] (4.east) -- (5.west);
         \draw[decoration={markings,mark=at position 1 with {\arrow[] {>}}},postaction={decorate}] (5.east) -- (6.west);
\end{tikzpicture}
\hspace{10mm}
\begin{tikzpicture}
         \node (1) at (0, .7) [circle, draw=black, inner sep=1pt, font=\tiny] {1};
         \node (2) at (1, .7) [circle, draw=black, inner sep=1pt, font=\tiny] {2};
         \node (3) at (2, .7) [circle, draw=black, inner sep=1pt, font=\tiny] {4};
         \node (4) at (0, 0) [circle, draw=black, inner sep=1pt, font=\tiny] {6};
         \node (5) at (1, 0) [circle, draw=black, inner sep=1pt, font=\tiny] {5};
         \node (6) at (2, 0) [circle, draw=black, inner sep=1pt, font=\tiny] {3};
         \draw[decoration={markings,mark=at position 1 with {\arrow[] {>}}},postaction={decorate}] (2.west) -- (1.east);
         \draw[decoration={markings,mark=at position 1 with {\arrow[] {>}}},postaction={decorate}] (3.south) -- (6.north);
         \draw[decoration={markings,mark=at position 1 with {\arrow[] {>}}},postaction={decorate}] (4.east) -- (5.west);
         \draw[decoration={markings,mark=at position 1 with {\arrow[] {>}}},postaction={decorate}] (5.east) -- (6.west);
\end{tikzpicture}
\end{center}
Taking $H$ to be the fourth graph, we have 
\[A(H)=
\begin{pmatrix} 
-a_{01} & a_{12} & & & & \\
 & -a_{12} & & & & \\
 & & -a_{03} & a_{34} & a_{35} & \\
 & & & -a_{34} & & \\
 & & & & -a_{35} & a_{56} \\
 & & & & & -a_{56}
\end{pmatrix}.
\]

Since the diagonal entries are algebraically independent over $\mathbb{Q}$ and  algebraic over the field extension of $\Q$ generated by the coefficients of the characteristic polynomial of $A(H)$, it follows that the coefficients of the characteristic polynomial of $A(H)$ are algebraically independent over $\Q$.

For all $i, j$, if the coefficients of the characteristic polynomial of $A(G)|_{a_{i,j}=0}$ are algebraically independent, then the coefficients of the characteristic polynomial of $A(G)$ are algebraically independent.  
Since $A(H)$ can be obtained  from $A(G)$ by setting equal to $0$ those $a_{i,j}$ such that $H$ has no edge from $j$ to $i$, it follows that the coefficients of the characteristic polynomial of $A(G)$ are non-zero and algebraically independent.
Since these $n$ coefficients belong to the field extension of $\mathbb{Q}$ generated by $n$ eigenvalues, the eigenvalues must be algebraically independent as well.
\end{proof}

\begin{theorem}[Main Result~\ref{thm:main2}]\label{thm:lincomp}
Let $\Sigma$ be a linear compartment model with graph $G$ such that, from every vertex of $G$, at least one leak or input is reachable.
Then the fields of identifiable and IO-identifiable functions coincide.
\end{theorem}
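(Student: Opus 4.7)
To prove Theorem~\ref{thm:lincomp}, I would follow the same high-level strategy as in Theorem~\ref{thm:linearIO}: the inclusion identifiable $\subseteq$ IO-identifiable is \cite[Theorem~4.2]{ident-compare}, so only the reverse inclusion needs proof. The plan is to apply a criterion in the spirit of \cite[Lemma~4.6]{ident-compare}: the coefficients of a full set $(p_1,\ldots,p_m)$ of input-output equations are identifiable provided the non-leading monomials appearing in these equations are linearly independent over the field of constants $C(F)$ of the differential field $F = \Frac(\CC(\bm{\mu})\{\mathbf{x},\mathbf{y},\mathbf{u}\}/I_\Sigma)$; equivalently, their joint Wronskian must not lie in $I_\Sigma$.

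To establish this linear independence, I would first pin down $C(F)$ as narrowly as possible. Lemma~\ref{lem:fconst} already gives $C(F) \subseteq \CC(\bm{\mu},\mathbf{x})$. For any $c \in C(F)$, expanding $c' = \sum_i (\partial c/\partial x_i)(A(G)\mathbf{x}+\mathbf{u})_i = 0$ in $F$ and using that the derivatives of $\mathbf{u}$ are differentially independent over $\CC(\bm{\mu},\mathbf{x})$, this constraint splits into two pieces: (i) $\partial c/\partial x_k = 0$ for each $k \in \In$, coming from the linear-in-$u_k$ terms, and (ii) $\nabla_{\mathbf{x}} c \cdot A(G)\mathbf{x} = 0$, coming from the remaining $\mathbf{x}$-dependent terms.

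Next, I would exploit the reachability hypothesis on the graph $G$. If every vertex reaches a leak, Lemma~\ref{lem:eigen} supplies algebraically independent eigenvalues of $A(G)$, which forces any $c \in \CC(\bm{\mu},\mathbf{x})$ satisfying (ii) to lie in $\CC(\bm{\mu})$. For vertices that reach only an input, I would augment $G$ by attaching fresh auxiliary leak parameters at each input vertex; the augmented graph then satisfies the leak-reachability hypothesis of Lemma~\ref{lem:eigen}, and condition (i) is precisely what lets the analysis on the augmented graph descend to the original system. This reduces both cases uniformly to $C(F) = \CC(\bm{\mu})$.

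With $C(F)$ pinned down, the final step is to upgrade the single-equation Wronskian argument of Lemma~\ref{lem:alglemma} to the multi-output setting, using the ordering prescribed in Definition~\ref{def:ioeq} to control how the $p_\ell$'s interact through the shared $y$-variables; triviality of $C(F)$ over $\CC(\bm{\mu})$ then translates the required Wronskian nonvanishing into a statement that can be verified monomial-by-monomial. The hardest part I anticipate is the ``input-only'' subcase: making precise how the orthogonality constraint (i) imposed by an input substitutes for the eigenvalue independence that a genuine leak would provide. This will require careful bookkeeping when augmenting the graph and transferring Lemma~\ref{lem:eigen} to the augmented setting, and it is the step where the bulk of the technical work is likely to live.
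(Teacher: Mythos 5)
Your computation of the constant field is essentially the paper's argument. The paper forms $\Sigma^\ast$ by replacing every input with a leak, uses Lemma~\ref{lem:fconst} to place any first integral in $\CC(\bm{\mu},\mathbf{x})$, observes that the $u_i$-linear term of the Lie derivative forces $\partial R/\partial x_i = 0$ for $i \in \In$ (your condition (i)), concludes $\mathcal{L}_{\Sigma^\ast}(R) = \mathcal{L}_{\Sigma}(R) = 0$, and then kills $\Sigma^\ast$'s first integrals via Lemma~\ref{lem:eigen} together with a theorem of Nowicki on derivations with algebraically independent eigenvalues. Your augmentation by auxiliary leaks at input vertices and your conditions (i)--(ii) are this same reduction, so that half of your plan is sound and matches the paper.

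The gap is in your final step. Having established $C(F) = \CC(\bm{\mu})$, the paper does \emph{not} run any Wronskian argument: it invokes \cite[Theorem~4.7]{ident-compare}, which states directly that a model without rational first integrals has identifiable $=$ IO-identifiable. You instead propose to ``upgrade the single-equation Wronskian argument of Lemma~\ref{lem:alglemma} to the multi-output setting,'' and you yourself flag this as where ``the bulk of the technical work is likely to live'' --- but you give no indication of how it would go. This is not a routine upgrade: Section~\ref{sec:ex_nonsolvable} exhibits a two-output linear system whose full set of IO-equations has non-leading monomials that are linearly dependent over the constants, so any multi-output Wronskian lemma must use the trivial-constants hypothesis in an essential way, and you have not shown how the shared $y$-variables across the $p_\ell$ (which interact through the ordering in Definition~\ref{def:ioeq}) are handled. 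In effect, the hardest unproved step of your proposal is precisely the content of the cited \cite[Theorem~4.7]{ident-compare}; without either proving that step or citing such a result, the proof is incomplete.
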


\begin{proof}
Let $K := \Frac(\CC(\bm{\mu})\{\mathbf{x},\mathbf{y},\mathbf{u}\}/\hyperlink{ISigma}{I_\Sigma})$.
 We will show that $\Sigma$ does not have a rational first integral, that is $C(K) = \CC(\bm{\mu})$.
 Then the theorem will follow from  \cite[Theorem~4.7]{ident-compare}.
 Consider a model $\Sigma^{\ast}$ with a graph $G^\ast$ obtained from $G$ by replacing every input with a leak (if there was a vertex with an input and a leak, we simply remove the input).
 The theorem will follow from the following two claims.
 
 \emph{Claim: If $\Sigma$ has a rational first integral, then $\Sigma^\ast$ also does.}
 Consider a first integral of $\Sigma$, that is, an element of $C(K) \setminus \CC(\bm{\mu})$.
 Lemma~\ref{lem:fconst} implies that there exists $R\in\CC(\bm{\mu}, \mathbf{x}) \backslash \CC$ such that $c$ is the image of $R$ in $K$.
 Since \[\CC[\bm{\mu}, \mathbf{x}]\{\mathbf{u}\} \cap I_\Sigma = 0\] due to~\cite[Lemma~3.1]{Hong} and the image of $R$ in $K$ is a constant, the Lie derivative of $R$ with respect to $\Sigma$,
 \[
 \mathcal{L}_{\Sigma}(R) := \sum\limits_{i = 1}^n \frac{\partial R}{\partial x_i} f_i,
 \]
where $f_1,\ldots,f_n$ are as in \eqref{eq:sigma}, is zero.
 If there exists $i \in \In$ such that $x_i$ appears in $R$, then $\mathcal{L}_{\Sigma}(R)$ will be of the form
 \[
 \mathcal{L}_{\Sigma}(R) = \frac{\partial R}{\partial x_i} b_i(\bm{\mu})u_i + (\text{something not involving } u_i) \neq 0.
 \]
 Thus, $R$ does not involve any $x_i$ with $i \in \In$.
 Then, due to the construction of $G^\ast$, $\mathcal{L}_{\Sigma^\ast} (R) = \mathcal{L}_{\Sigma} (R) = 0$, so $\Sigma^\ast$ also has a rational first integral.

 \underline{\emph{Claim: $\Sigma^\ast$ does not have rational first integrals.}} 
 Lemma~\ref{lem:eigen} implies that the eigenvalues of $A(G^\ast)$ are algebraically independent.
 Then~\cite[Theorem 10.1.2, p. 118]{nowicki94} implies that $\Sigma^\ast$ does not have rational first integrals.
\end{proof}


\subsection{Using more convenient IO-equations (proof of Theorem~\ref{thm:main3})}
 For the notation, see Section~\ref{sec:lincomp}.
\begin{lemma}\label{lem:poly}
Let $K$ be a field.
For all $a,b,c\in K[x]$  such that $\gcd(a,b) = 1$, there  exists at most one pair $(p, q)$ of elements of $K[x]$ such that
$ap + bq = c$ and $\deg p < \deg b$.
\vspace{0.01in}
\end{lemma}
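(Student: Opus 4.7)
The plan is to prove uniqueness by the standard subtraction argument used for Bezout-type identities. Suppose $(p_1, q_1)$ and $(p_2, q_2)$ are two pairs in $K[x] \times K[x]$ satisfying $ap_i + bq_i = c$ and $\deg p_i < \deg b$ for $i = 1,2$. Subtracting the two equations yields
\[
a(p_1 - p_2) = b(q_2 - q_1).
\]
From here, the idea is to exploit coprimality of $a$ and $b$ to force $p_1 = p_2$, and then to deduce $q_1 = q_2$.

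The key step is to invoke the fact that in $K[x]$, if $b \mid a(p_1 - p_2)$ and $\gcd(a,b) = 1$, then $b \mid (p_1 - p_2)$. Combined with the hypothesis $\deg(p_1 - p_2) \leqslant \max(\deg p_1, \deg p_2) < \deg b$, this divisibility is only possible if $p_1 - p_2 = 0$. (The degenerate case $b = 0$ would force $a$ to be a unit by the coprimality condition, and then $\deg p < \deg b = -\infty$ forces $p_1 = p_2 = 0$ as well, so the argument carries through under the usual convention $\deg 0 = -\infty$.)

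Once $p_1 = p_2$ is established, the identity $a(p_1 - p_2) = b(q_2 - q_1)$ reduces to $b(q_2 - q_1) = 0$. Assuming $b \neq 0$ (the only nondegenerate case relevant to the applications, where $b$ plays the role of a characteristic polynomial), $K[x]$ being an integral domain forces $q_1 = q_2$. Hence the pair $(p, q)$, if it exists, is unique. No step here appears to be a real obstacle; the only subtlety is the degree-versus-divisibility argument, which is completely standard.
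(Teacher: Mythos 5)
Your proof is correct and takes essentially the same route as the paper: both subtract the two identities to get $a(p_1-p_2)=b(q_2-q_1)$ and then combine $\gcd(a,b)=1$ with the degree bound $\deg(p_1-p_2)<\deg b$ to force $p_1=p_2$, after which $q_1=q_2$ follows since $K[x]$ is a domain. Your write-up is merely a bit more explicit about the divisibility step and the degenerate case $b=0$ (which the paper leaves implicit and which is vacuous anyway, since no $p$ satisfies $\deg p<\deg 0=-\infty$).
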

\begin{proof}
Suppose $(p,q)$ and $(p_1,q_1)$ are distinct pairs satisfying the two properties above.  It follows that \begin{equation}\label{eq:abp}a(p-p_1) + b(q-q_1) = 0.
\end{equation}
Since $(p,q)\ne (p_1,q_1)$,~\eqref{eq:abp} implies that $p\ne p_1$.  Since \[\deg(p-p_1) < \deg b,\]\eqref{eq:abp} implies   $\gcd(a,b) \neq 1$,  contradicting our hypothesis.
\end{proof}

\begin{corollary}\label{cor:poly}
  Let $K$ be a field containing $\CC$ and $a,b,c\in K[x]$  
  with $\gcd(a,b) = 1$.  
  If there is a pair of polynomials $(p,q)$  
   with
  \[ap+bq = c\quad \text{and}\quad \deg p < \deg b,\] then the coefficients of $p$ and $q$ belong to the field extension of $\CC$ generated by the coefficients of $a$, $b$, and $c$.
\end{corollary}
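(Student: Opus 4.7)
The strategy is to reduce to the uniqueness statement of Lemma~\ref{lem:poly} after producing a candidate solution whose coefficients lie in the smaller field. Concretely, let $F$ denote the subfield of $K$ generated over $\mathbb{C}$ by the coefficients of $a$, $b$, and $c$, so $a, b, c \in F[x]$. The first step is to observe that coprimality is preserved when passing from $K[x]$ down to $F[x]$: any common divisor of $a$ and $b$ in $F[x]$ is also a common divisor in $K[x]$, hence must be a unit since $\gcd_K(a,b)=1$. Therefore $\gcd_{F[x]}(a,b) = 1$.

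Next, since $F[x]$ is a Euclidean domain, I would invoke B\'ezout over $F$ to obtain $\alpha, \beta \in F[x]$ with $a\alpha + b\beta = 1$, and multiply by $c$ to get $a(\alpha c) + b(\beta c) = c$. Euclidean division of $\alpha c$ by $b$ in $F[x]$ produces $\alpha c = bQ + p'$ with $Q, p' \in F[x]$ and $\deg p' < \deg b$. Setting $q' := \beta c + aQ \in F[x]$, the identity becomes $a p' + b q' = c$ with $p', q' \in F[x]$ and $\deg p' < \deg b$.

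The final step is to apply Lemma~\ref{lem:poly} over the larger field $K$: both $(p, q)$ (given in the hypothesis) and $(p', q')$ (just constructed) satisfy $ap + bq = c$ with $\deg p < \deg b$, so by uniqueness $p = p'$ and $q = q'$. Since $p', q' \in F[x]$, this forces the coefficients of $p$ and $q$ to lie in $F$, as desired.

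I do not anticipate any serious obstacle; the only point that needs a brief justification is the descent of coprimality from $K[x]$ to $F[x]$, and this is immediate from the divisibility of $\gcd$'s under subring inclusion. All subsequent steps are standard manipulations with the extended Euclidean algorithm, and the uniqueness of the reduced solution supplied by Lemma~\ref{lem:poly} does the rest.
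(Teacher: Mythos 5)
Your proof is correct, but it takes a genuinely different route from the paper's. Both arguments hinge on the uniqueness statement of Lemma~\ref{lem:poly}; the difference is how the second, competing solution is produced. You construct it explicitly: after observing that coprimality descends to $F[x]$ (where $F$ is the field generated over $\CC$ by the coefficients of $a$, $b$, $c$), you run B\'ezout and Euclidean division inside $F[x]$ to manufacture a pair $(p',q')$ with coefficients in $F$ and $\deg p' < \deg b$, and then uniqueness over $K$ forces $(p,q)=(p',q')$. The paper instead argues by contradiction via Galois descent: if some coefficient of $p$ or $q$ lay outside $F$, an automorphism $\sigma$ of $\overline{K}$ fixing $F$ would move it, and applying $\sigma$ to $ap+bq=c$ would yield a second distinct solution $(\sigma(p),\sigma(q))$, contradicting Lemma~\ref{lem:poly} over $\overline{K}$. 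Your version is more elementary and slightly more general --- it needs neither the algebraic closure, nor the automorphism-extension theorem, nor in fact the hypothesis $\CC\subset K$ or characteristic zero --- and it is constructive, exhibiting the reduced solution rather than merely showing it cannot differ from $(p,q)$. The paper's version is shorter on the page, at the cost of invoking a nontrivial field-theoretic existence result. All the steps you flag as routine (descent of coprimality, B\'ezout in $F[x]$, the degree bookkeeping in $q'=\beta c+aQ$) check out, including the degenerate case $\deg b=0$, where both $p$ and $p'$ are forced to be zero.
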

\begin{proof}
Suppose some coefficient of $p$ or $q$ does not belong to the field generated by the coefficients of $a$, $b$, and $c$.  By \cite[Theorem 9.29, p. 117]{milneFT}, there is a field automorphism $\sigma$ of $\overline{K}$ that fixes the field extension of $\CC$ generated by the coefficients of $a$, $b$, and $c$ and moves this coefficient. 

We extend $\sigma$ to $\overline{K}[x]$ by $\sigma(x) = x$.
Applying $\sigma$ to both sides of $ap+bq=c$ gives us 
$a\sigma(p)+b\sigma(q) = c$. 
Using $\overline{K}$ for $K$ in Lemma~\ref{lem:poly}, we arrive at a contradiction.
\end{proof}

\begin{theorem}[Main Result~\ref{thm:main3}]\label{thm:compcoeff}
Let $\Sigma$ be a linear compartment model with a graph $G$.
Let $A = \hyperlink{AG}{A(G)}$ and $M_{ji}$ be the submatrix of $\partial I - A$ obtained by deleting the $j$-th row and the $i$-th column of $\partial I - A$.
Recall that (see~\eqref{eq:io_lincomp}), for every solution of $\Sigma$, we have  for every  $i \in \Out$,
\begin{equation*}
    \det(\partial I - A) (y_i) = \frac{1}{c_i(\bm{\mu})}\sum\limits_{j \in \In} (-1)^{i + j} \det(M_{ji}) (b_j(\bm{\mu})u_j).
\end{equation*}
If 
\hyperlink{GraphG}{$G$} is strongly connected and has at least one input, then the coefficients of these differential polynomials with respect to $y$'s and $u$'s  generate the \hyperlink{idfield}{field of identifiable functions} of $\Sigma$. 
\end{theorem}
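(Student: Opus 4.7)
The plan has three steps.

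\emph{Step 1 (Reduction to IO-identifiability).} Because $G$ is strongly connected and has at least one input vertex $j_0 \in \In$, every vertex of $G$ reaches $j_0$, so the hypothesis of Main Result~\ref{thm:main2} (Theorem~\ref{thm:lincomp}) is satisfied and the field of identifiable functions coincides with the field $k_2$ of IO-identifiable functions. It therefore suffices to show that the field $k_1$ generated by the coefficients of the equations~\eqref{eq:io_lincomp} equals $k_2$.

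\emph{Step 2 (Strong connectivity gives the correct order).} Strong connectivity also implies that for every $i \in \Out$ every vertex of $G$ reaches $i$, so the single-output subsystem $\Sigma_i$ obtained from $\Sigma$ by keeping the full state dynamics but retaining only the observation $y_i$ is observable. Consequently, the minimal monic linear element $\pi_i$ of the ideal $I_\Sigma\cap\CC(\bm{\mu})\{y_i,\mathbf{u}\}$ satisfies $\ord_{y_i}\pi_i = n$. Since equation~\eqref{eq:io_lincomp}$_i$ is itself a monic linear differential polynomial in $y_i$ of order exactly $n$ lying in the same ideal, the difference $\pi_i - $ equation~\eqref{eq:io_lincomp}$_i$ has strictly smaller order in $y_i$ and belongs to this ideal; by minimality of $\pi_i$ the difference must vanish, so equation~\eqref{eq:io_lincomp}$_i = \pi_i$.

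\emph{Step 3 (Two inclusions).} For $k_1\subseteq k_2$: the identification of equation~\eqref{eq:io_lincomp}$_i$ with $\pi_i$ makes it the minimal IO-equation of the single-output system $\Sigma_i$, so Main Result~\ref{thm:main1} (Theorem~\ref{thm:linearIO}) applied to $\Sigma_i$ implies that its coefficients are identifiable for $\Sigma_i$, and a fortiori for $\Sigma$, hence they lie in $k_2$. For the reverse inclusion $k_2\subseteq k_1$ --- which I expect to be the main obstacle --- I plan to pass through the transfer function matrix $T(s)=C(sI-A)^{-1}B$ via Lemma~\ref{lem:trans_io}. Strong connectivity together with the existence of an input yields that for every $(i,j)\in\Out\times\In$ the SISO subsystem $(A,b_j e_j,c_i e_i^T)$ is both controllable and observable, whence $\gcd(\det(sI-A),\det(M_{ji}))=1$ in $\CC(\bm{\mu})[s]$; this coprimality lets one identify $k_1$ with the field generated by the coefficients of the entries of $T(s)$ taken in lowest terms. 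Because the realization $(A,B,C)$ is then minimal, $T(s)$ is a complete invariant of the realization modulo the structure-preserving similarities of the compartment model, so every identifiable (equivalently, every similarity-invariant) function of $\bm{\mu}$ is a rational function of these coefficients and therefore lies in $k_1$.

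The hard part is the inclusion $k_2\subseteq k_1$: a full set of IO-equations typically couples several outputs in a nontrivial way, whereas~\eqref{eq:io_lincomp} decouples them output by output, so a priori the latter's coefficients could generate a strictly smaller field. The hypothesis that $G$ is strongly connected and has at least one input is exactly what forces the state-space realization to be minimal, ensuring that the transfer function matrix --- and hence $k_1$ --- already records every similarity invariant of $(A,B,C)$.
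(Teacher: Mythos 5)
Your Step 1 is exactly how the paper concludes, and your Step 2 is in the right spirit, but the assertion that strong connectivity makes each single-output subsystem observable (equivalently, that the minimal monic linear element of $I_\Sigma\cap\CC(\bm{\mu})\{y_i,\mathbf{u}\}$ has order $n$ in $y_i$) cannot be waved through: the entries of $A(G)$ are structured (the diagonal is determined by the off-diagonal entries and the leaks), so generic-rank arguments for structured systems do not apply off the shelf. The paper obtains what you need from the coprimality $\gcd\bigl(\det(\partial I-A),\{\det M_{ji}\}\bigr)=1$ of \cite[Proposition~3.19]{Meshkat18}, writing $h_i=D_0p_1$ and forcing $\ord D_0=0$; you should cite that result rather than derive it from an unproven minimality claim. (Also, the difference $\pi_i-h_i$ could a priori lie in $\CC(\bm{\mu})\{\mathbf{u}\}$ rather than involve $y_i$; one must add that $\mathbf{u}$ is differentially independent modulo $I_\Sigma$ to conclude it vanishes.)

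The genuine gap is your reverse inclusion $k_2\subseteq k_1$. You propose to deduce it from ``the transfer function of a minimal realization is a complete invariant, hence every identifiable function is a rational function of its coefficients.'' That is essentially Corollary~\ref{cor:trans}, i.e., a consequence of the theorem being proved, not a tool available for proving it: identifiability here is defined with \emph{generic} (not zero) initial conditions, so the output contains the free response in addition to the forced response, and a priori two parameter vectors with the same transfer function could still be distinguished by the free response --- exactly the failure mode discussed around \cite[Example~10.6]{DiStefanoBook} and in Section~\ref{sec:ce}. The state-isomorphism theorem gives a similarity between two minimal realizations with equal transfer functions, but it does not by itself show that this similarity preserves the generic-initial-condition output map, nor that every IO-identifiable function is similarity-invariant. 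What is actually needed, and what the paper supplies, is a direct comparison of the equations $h_i=D(y_i)-Q_i\cdot\mathbf{u}$ with a full set of IO-equations for the ordering $y_m>\ldots>y_1$: a transcendence-degree count shows that $p_i=y_i+D_i(y_1)+P_i\cdot\mathbf{u}$ with $\ord_{y_i}p_i=0$ for $i\geqslant 2$, whence $D(P_i)_j+D_i(Q_1)_j=-(Q_i)_j$; the coprimality $\gcd(D,(Q_1)_j)=1$ together with the uniqueness of low-degree B\'ezout representations (Lemma~\ref{lem:poly} and Corollary~\ref{cor:poly}) then forces the coefficients of $D_i$ and $(P_i)_j$ into the field generated by the coefficients of the $h$'s. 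Without an argument of this kind, your inclusion $k_2\subseteq k_1$ is unsupported.
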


\begin{proof}
Without loss of generality, assume $\Out = \{1,\ldots,m\}$.  
We set, for $i = 1, \ldots, m$,
\begin{equation}\label{eq:meshkat}
h_i := \det(\partial I - A)(y_i) - \tfrac{1}{c_i(\bm{\mu})}\sum_{j \in \In}(-1)^{i+j}\det(M_{ji})b_j(\bm{\mu})u_j. \hypertarget{MatA}{}
\end{equation}
Let also $D = \det(\partial I - A)$ and, for $i = 1, \ldots, m$, let $Q_i$ be the $1 \times n$ matrix of operators defined by 
\begin{equation}\label{eq:lincompq}
\begin{cases}(Q_i)_j = \frac{(-1)^{i+j}}{c_i(\bm{\mu})}\det(M_{ji}) b_j(\bm{\mu}),&j \in \In,\\
(Q_i)_j = 0, &j \notin \In.
\end{cases}
\end{equation}
Observe that, for $i=1,\ldots,m$
$h_i = D(y_i) - Q_i \cdot \mathbf{u}$,
where $\mathbf{u}$ is the $n \times 1$ matrix defined by $\mathbf{u}_j = u_j$ if $j \in \In$ and $\mathbf{u}_j = 0$ otherwise.

First we show that the coefficients of $h_1, \ldots, h_m$ are IO-identifiable. 
Fix $i$.  
Consider an ordering of the outputs such that $y_i$ is the smallest one.
Let $p_1, \ldots, p_m$ be a full set of input-output equations with respect to this ordering (see Definition~\ref{def:ioeq}) which exists due to Proposition~\ref{prop:linear_equiv}.
Then $p_1$ is of the form \[
E(y_i) + B \cdot \mathbf{u},\] where $E$ is a linear differential operator and $B$ is a $1 \times n$ matrix of linear differential operators, both with  coefficients in $\CC(\bm{\mu})$.
Since $h_i \in I_{\Sigma}$ and $h_i$ involves only $y_i$ and $\mathbf{u}$, the second part of Proposition~\ref{prop:linear_equiv} implies that $h_i \in [p_1]$, so there exists a differential operator $D_0 \in \CC(\bm{\mu})[\partial]$ such that $h_i = D_0p_1$.
 Since $G$ is strongly connected and has an input, by \cite[Proposition~3.19]{Meshkat18}, 
 \[
 \gcd(D \cup \{(Q_i)_j \mid (Q_i)_j \neq 0\}) = 1.
 \]  
 Thus $D_0$ has order zero, so $h_i$ and $p_1$ are proportional. 
 Therefore, the coefficients of \eqref{eq:meshkat} are IO-identifiable.

Next, we show that the field generated by the coefficients of $h_1,\ldots,h_m$ contains the field of IO-identifiable functions.  
Fix an ordering on the outputs $ y_m > \ldots > y_1$. 
We will show that the full set $p_1, \ldots, p_m$ of input-output equations with respect to this ordering satisfies:
\begin{equation}
    \ord_{y_1} p_1 = n, \quad \ord_{y_i} p_i = 0 \text{ for every } 2 \leqslant i \leqslant m.
\end{equation}
The fact that $\ord_{y_1} p_1 = n$ is implied by the previous paragraph.
From \eqref{eq:sigma_lin}, we see that the transcendence degree of \[\CC(\bm{\mu})\{\mathbf{x},\mathbf{y},\mathbf{u}\}/I_\Sigma\] over $\CC(\bm{\mu})\{\mathbf{u}\}$ is equal to $n$, so the transcendence degree of \[\CC(\bm{\mu})\{\mathbf{y},\mathbf{u}\}/(I_\Sigma \cap \CC(\bm{\mu})\{\mathbf{y},\mathbf{u}\})\] over $\CC(\bm{\mu})\{\mathbf{u}\}$ is less than or equal to $n$.  
From the form of $p_1$, we have that $y_1,y_1',\ldots,y_{1}^{(n-1)}$ are algebraically independent over $\CC(\bm{\mu})\{\mathbf{u}\}$, so for $i=2,\ldots,m$, the elements 
$y_i, y_1, y_1', \ldots,y_1^{(n-1)}$ 
must be algebraically dependent over $\CC(\bm{\mu})\{\mathbf{u}\}$.  Hence, the equation for $y_i$ has order $0$ in $y_i$.
Thus,  
\[
p_1 = D(y_1) - Q_1 \cdot \mathbf{u}
\] and, for every, $2 \leqslant i \leqslant m$, we can write 
\[
p_i = y_i + D_i(y_1) + P_i \cdot \mathbf{u},
\]
where $P_i$ is a $1 \times n$ matrix of linear differential operators and the order of operator $D_i$ is at most $n - 1$.

We show that the coefficients of $p_1, \ldots, p_m$ can be written in terms of the coefficients of $h_1,\ldots,h_m$.  
Since $h_1$ equals $D(y_1) - Q_1 \cdot \mathbf{u}$, this is true for the coefficients of $D$ and $Q_1$.  
It remains to show this for the coefficients of $D_2,\ldots,D_m$ and $P_2,\ldots,P_m$.  
Note that for all $i$ and for $j \not\in \In$ we have $(P_i)_j = 0$, so we need only address the coefficients of $(P_i)_j$ for $j \in \In$.

Fix $i > 1$ and let $g = y_i + D_i(y_1) + P_i(\mathbf{u})$.  
We have that \[D(g) - D_i(h_1) = D(y_i) + (DP_i + D_iQ_1)(\mathbf{u}) \in I_\Sigma.\]  It follows that $D(y_i) + (DP_i + D_iQ_1)(\mathbf{u}) = h_i$, so, for all $j$,
\[D(P_i)_j + D_i(Q_1)_j = -(Q_i)_j.\]
By the hypothesis of the theorem, $\In \neq \varnothing$.  
Fix $j \in \In$.  
We apply~\cite[Proposition~3.19]{Meshkat18} to the model obtained from $\Sigma$ by deleting all the inputs except for $j$ and obtain, using  $D \neq 1$, that
$
  \gcd(D, (Q_1)_j) = 1$ for every  $j \in \In$.
By Corollary~\ref{cor:poly}, 
the coefficients of $(P_i)_j$ and $D_i$ belong to the field extension of $\CC$ generated by the coefficients of $D$, $(Q_1)_j$, and $(Q_i)_j$.
We  showed that the field extension of $\CC$ generated by the coefficients of $h_1,\ldots,h_m$ is  the field of IO-identifiable functions.  By Theorem~\ref{thm:lincomp}, this is the field of identifiable functions.
\end{proof}


\appendix\section{General facts about identifiability and IO-identifiability}\hypertarget{AppendixA}{}\label{app:general}

\subsection{General definition of identifiability}\label{sec:ident_general}

In this section, we will generalize the notions from Section~\ref{sec:prelim_ident} to ODE systems with rational right-hand side.
\hyperlink{vars}{}
Fix positive integers $\lambda$, $n$, $m$, and $\kappa$ for the remainder of the appendix.
Let $\bm{\mu} = (\mu_1,\ldots,\mu_\lambda)$, $\mathbf{x} = (x_1, \ldots, x_n)$, $\mathbf{y} = (y_1, \ldots, y_m)$, and $\mathbf{u} = (u_1,\ldots,u_\kappa)$.
Consider a system of ODEs \hypertarget{Sigma}{}
\begin{equation}\label{eq:sigma}
\Sigma = 
\begin{cases}
\mathbf{x}' = \cfrac{\mathbf{f}(\mathbf{x}, \bm{\mu}, \mathbf{u})}{Q(\mathbf{x}, \bm{\mu}, \mathbf{u})}, \\
\mathbf{y} = \cfrac{\mathbf{g}(\mathbf{x}, \bm{\mu}, \mathbf{u})}{Q(\mathbf{x}, \bm{\mu}, \mathbf{u})},\\
\mathbf{x}(0) = \mathbf{x}^\ast,
\end{cases}
\end{equation}
where $\mathbf{f} = (f_1, \ldots, f_n)$ and $\mathbf{g} = (g_1, \ldots, g_m)$ are tuples of elements of $\CC[\bm{\mu},\mathbf{x},\mathbf{u}]$ and $Q\in\CC[\bm{\mu},\mathbf{x},\mathbf{u}] \backslash \{0\}$.

\begin{notation}[Ideal $I_\Sigma$]
\begin{enumerate}[label = (\alph*),leftmargin=7.5mm]
\item[]
    \item \hypertarget{colon}{}\hypertarget{infinity}{}
  For an ideal $I$ and element $a$ in a ring $R$, we denote \[I \colon a^\infty = \{r \in R \mid \exists \ell\colon a^\ell r \in I\}.\]
  This set is also an ideal in $R$.
  \hypertarget{ISigma}{}
    \item Given $\Sigma$ as in~\eqref{eq:sigma}, we define the differential ideal of $\Sigma$: \[I_\Sigma=[Q\mathbf{x}'-\mathbf{f},Q\mathbf{y}-\mathbf{g}]:Q^\infty \subset \CC(\bm{\mu})\{\mathbf{x},\mathbf{y},\mathbf{u}\}.\]
    For the case of a linear system as in~\eqref{eq:sigma_lin}, this ideal coincides with the one from Notation~\ref{not:lin_diffalg}.
\end{enumerate}
\end{notation}

\begin{notation}[Auxiliary analytic notation]
\begin{enumerate}[label = (\alph*),leftmargin=5.5mm]
\item[]
    \item 
    For every given $h \in \CC(\mathbf{x}^\ast, \bm{\mu})$, let 
    \begin{align*}\Omega=\{(\hat{\mathbf{x}}^*,\hat{\bm{\mu}},\hat{\mathbf{u}})& \in \CC^n \times \CC^\lambda \times (\CC^\infty(0))^\kappa \mid Q(\hat{\mathbf{x}}^*,\hat{\bm{\mu}},\hat{\mathbf{u}}(0)) \ne 0\}\\
    \Omega_h = \Omega \cap (&\{(\hat{\mathbf{x}}^\ast, \hat{\bm{\mu}}) \in \CC^{n + \lambda} \mid h(\hat{\mathbf{x}}^\ast, \hat{\bm{\mu}}) \text{ well-defined}\}\\ &\times (\CC^\infty(0))^\kappa).
    \end{align*}
    \item For $(\hat{\mathbf{x}}^\ast, \hat{\bm{\mu}}, \hat{\mathbf{u}}) \in \Omega$, let $X(\hat{\mathbf{x}}^\ast, \hat{\bm{\mu}}, \hat{\mathbf{u}})$ and $Y(\hat{\mathbf{x}}^\ast, \hat{\bm{\mu}}, \hat{\mathbf{u}})$ denote the unique solution over $\CC^\infty(0)$ of the instance of $\Sigma$ with $\mathbf{x}^\ast = \hat{\mathbf{x}}^\ast$, $\bm{\mu} = \hat{\bm{\mu}}$, and $\mathbf{u} = \hat{\mathbf{u}}$ (see~\cite[Theorem~2.2.2]{Hille}).
    
\end{enumerate}
\end{notation}

\begin{definition}[Identifiability, see {\cite[Definition~2.5]{Hong}}]\label{def:seid}
We say that $h(\mathbf{x}^\ast,   \bm{\mu}) \in \CC(\mathbf{x}^\ast,  \bm{\mu})$ is \emph{identifiable} if 
\begin{gather*}
  \exists \Theta \in \tau(\CC^n \times \CC^\lambda) \; \exists U \in \tau((\CC^\infty(0))^\kappa)\;\\
  \forall (\hat{\mathbf{x}}^\ast, \hat{\bm{\mu}}, \hat{\mathbf{u}}) \in (\Theta \times U) \cap \Omega_h \quad |S_h(\hat{\mathbf{x}}^\ast, \hat{\bm{\mu}}, \hat{\mathbf{u}})| = 1,\\
 \text{where}\ \ 
  S_h(\hat{\mathbf{x}}^\ast, \hat{\bm{\mu}}, \hat{\mathbf{u}}) := \{h(\tilde{\mathbf{x}}^\ast,  \tilde{\bm{\mu}}) \mid  (\tilde{\mathbf{x}}^\ast, \tilde{\bm{\mu}}, \hat{\mathbf{u}}) \in \Omega_h\\ \quad\quad Y(\hat{\mathbf{x}}^\ast, \hat{\bm{\mu}}, \hat{\mathbf{u}}) = Y(\tilde{\mathbf{x}}^\ast, \tilde{\bm{\mu}}, \hat{\mathbf{u}}) \}.
  \end{gather*}
In this paper, we are interested in comparing identifiability and IO-identifiability (Definition~\ref{def:ioid}), and the latter is defined 
for functions in $\bm{\mu}$, not in $\bm{\mu}$ and $\mathbf{x}^\ast$.
Thus, just for the purpose of comparison, we will restrict ourselves to the field \[\{h \in \mathbb{C}(\bm{\mu})\mid h \text{ is identifiable}\},\] which we will call \emph{the field of identifiable functions}.
\end{definition}

\begin{definition}[IO-identifiability]\label{def:ioid}\hypertarget{fieldio}{}
The smallest field $k$ such that $\CC \subset k \subset \CC(\bm{\mu})$ and  $\hyperlink{ISigma}{I_\Sigma} \cap \CC(\bm{\mu})\{\mathbf{y},\mathbf{u}\}$ is generated (as an ideal or as a differential ideal) by $I_\Sigma \cap k\{\mathbf{y},\mathbf{u}\}$ is called \emph{the field of IO-identifiable functions}.

We call $h \in \CC(\bm{\mu})$  \emph{IO-identifiable} if $h \in k$.
\end{definition}


\subsection{Specialization to the linear case}\label{sec:spec_linear}

\begin{proposition}\label{prop:linear_equiv}
  For every system $\Sigma$ of the form~\eqref{eq:sigma_lin}:
  \begin{enumerate}[label=(\arabic*)]
      \item\label{prA6:part1} for every ordering of output variables, there exists a unique full set of input-output equations with respect to this ordering;
      \item\label{prA6:part2} if $p_1, \ldots, p_m$ is the full set of input-output equations with respect to $y_1 < \ldots < y_m$, then the derivatives of $p_1, \ldots, p_m$ form a Gr\"obner basis of $I_{\Sigma} \cap \mathbb{C}(\bm{\mu}) \{\mathbf{y}, \mathbf{u}\}$ with respect to any lexicographic monomial ordering such that 
  \begin{itemize}
        \item any derivative of any of $y$'s is greater any derivative of any of $u$'s;
        \item $y_{i_1}^{(j_1)} > y_{i_2}^{(j_2)}$ iff $i_1 > i_2$ or $i_1 = i_2$ and $j_1 > j_2$.
    \end{itemize}
    An analogous statement holds for  any 
    ordering of outputs.
    \item\label{prA6:part3} Definitions~\ref{def:ioid} and~\ref{def:ioid_lin} define the same field.
      In particular, the field defined in Definition~\ref{def:ioid_lin} does not depend on the choice of a full set of input-output equations. 
  \end{enumerate}
\end{proposition}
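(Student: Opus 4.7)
The plan is to exploit the linearity of $\Sigma$ to reduce everything to finite-dimensional linear algebra over $\mathbb{C}(\bm{\mu})$. The key preliminary observation is that the quotient ring $\mathbb{C}(\bm{\mu})\{\mathbf{x},\mathbf{y},\mathbf{u}\}/I_\Sigma$ is naturally isomorphic to $\mathbb{C}(\bm{\mu})[\mathbf{x}]\{\mathbf{u}\}$: since $\mathbf{f}$ and $\mathbf{g}$ have degree at most one in $\mathbf{x},\mathbf{u}$, a straightforward induction shows that the image $l_{i,k}$ of $y_i^{(k)}$ in this quotient is an affine-linear combination of $x_1,\ldots,x_n$ with coefficients in $\mathbb{C}(\bm{\mu})$, plus a $\mathbb{C}(\bm{\mu})$-linear form $L_{i,k}$ in the $u_j^{(\ell)}$'s. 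Then $J := I_\Sigma \cap \mathbb{C}(\bm{\mu})\{\mathbf{y},\mathbf{u}\}$ is the kernel of the induced ring map $\mathbb{C}(\bm{\mu})\{\mathbf{y},\mathbf{u}\} \to \mathbb{C}(\bm{\mu})[\mathbf{x}]\{\mathbf{u}\}$, so membership of a linear element in $J$ is equivalent to the vanishing of its $\mathbf{x}$-coefficient vector (in $\mathbb{C}(\bm{\mu})^n$) together with the cancellation of its $\mathbf{u}$-part.

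For part~\ref{prA6:part1}, I would construct $p_1,\ldots,p_m$ inductively. For $p_1$: the $\mathbf{x}$-coefficient vectors of $l_{1,0}, l_{1,1},\ldots$ live in $\mathbb{C}(\bm{\mu})^n$ and are therefore eventually linearly dependent; let $d_1$ be the smallest $k$ at which the vector of $l_{1,k}$ lies in the $\mathbb{C}(\bm{\mu})$-span of the earlier ones. Minimality fixes the recurrence coefficients uniquely, and adjoining the corresponding $\mathbb{C}(\bm{\mu})$-linear combination of the $L_{1,k}$'s produces the unique $p_1 \in J$ of the required form. For $p_i$ with $i>1$: I look for a linear combination of $\{y_j^{(k)} : j<i,\ k<d_j\}\cup\{y_i^{(k)}\}_{k\geq 0}$ plus a linear form in $\mathbf{u}^{(*)}$ whose total $\mathbf{x}$-coefficient vanishes; the same dimension count yields the minimal $d_i$ and uniqueness of all coefficients.

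For part~\ref{prA6:part2}, set $F := \{p_i^{(k)} : 1\leq i\leq m,\ k\geq 0\}$. Under the specified lex order, $\lead(p_i^{(k)})=y_i^{(d_i+k)}$. I would show $F$ is a Gröbner basis of $J$ by reducing an arbitrary nonzero $f\in J$ to $0$ modulo $F$. If no $y$-variable appears in $f$, then $f\in \mathbb{C}(\bm{\mu})\{\mathbf{u}\}\cap J = (0)$ since $\mathbf{u}$ is differentially independent in the quotient. Otherwise, with $\lead(f)=y_\ell^{(K)}$ for $\ell$ maximal, no $y_j$ with $j>\ell$ appears in $f$; after reducing the lower-order $y_j$-terms ($j<\ell$) using the $p_j^{(k)}$'s, the leading term $y_\ell^{(K)}$ survives, and the minimality defining $d_\ell$ in part~\ref{prA6:part1} forces $K\geq d_\ell$, so we reduce by $p_\ell^{(K-d_\ell)}$. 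The lex order well-orders these monomials, so the process terminates at $0$. The same argument adapts to any ordering of outputs.

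For part~\ref{prA6:part3}, let $k_0$ denote the field of Definition~\ref{def:ioid_lin} and $k^\ast$ the field of Definition~\ref{def:ioid}. Part~\ref{prA6:part2} shows that $J$ is generated as a differential ideal by $\{p_i^{(k)}\}$, all with coefficients in $k_0$, yielding $k^\ast\subseteq k_0$. For the reverse inclusion, observe that $J$ is "linearly generated" (by part~\ref{prA6:part2}), so the $k^\ast$-linear subspace $J\cap k^\ast\{\mathbf{y},\mathbf{u}\}$ contains linear elements whose $\mathbb{C}(\bm{\mu})$-span coincides with the span of $\{p_i^{(k)}\}$; the reduced row-echelon procedure described in part~\ref{prA6:part1}, when carried out over $k^\ast$ on such a generating set, produces the same unique $p_i$'s, forcing their coefficients to lie in $k^\ast$. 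Hence $k_0\subseteq k^\ast$. Independence from the choice of ordering follows since any two full sets have coefficients in the common field $k_0=k^\ast$. The main subtlety is this reverse inclusion: one must carefully verify that the minimal $p_i$'s are produced by linear-algebraic operations over $k^\ast$ rather than $\mathbb{C}(\bm{\mu})$, which ultimately reduces to the uniqueness in part~\ref{prA6:part1} applied over an arbitrary subfield containing the available generators.
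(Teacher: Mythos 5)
Your argument is correct and reaches all three conclusions, but by a more hands-on route than the paper's. The paper works entirely with Gr\"obner bases: it observes that the generators of $I_\Sigma$ are linear, invokes the Iima--Yoshino result that such differential ideals admit linear differential Gr\"obner bases for suitable lexicographic orderings, extracts the $p_i$ from the reduced Gr\"obner basis of the elimination ideal $J = I_\Sigma\cap\mathbb{C}(\bm{\mu})\{\mathbf{y},\mathbf{u}\}$, proves uniqueness by subtracting two candidate full sets, and obtains part~(3) from the standard fact that the coefficients of a reduced Gr\"obner basis generate the field of definition of the ideal. You instead identify the quotient $\mathbb{C}(\bm{\mu})\{\mathbf{x},\mathbf{y},\mathbf{u}\}/I_\Sigma$ with $\mathbb{C}(\bm{\mu})[\mathbf{x}]\{\mathbf{u}\}$ and build each $p_i$ explicitly from the first linear dependence among the $\mathbf{x}$-coefficient vectors of the images of the $y_i^{(k)}$ (an observability-matrix computation); this buys an elementary, self-contained existence-and-uniqueness proof, with the bound $\sum_i \ord_{y_i}p_i\leqslant n$ for free, at the cost of re-deriving by hand in part~(2) what the cited Gr\"obner-basis machinery gives the paper directly. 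Two places where your sketch is thinnest, though both are fillable: in the inductive step of part~(1), uniqueness of $p_i$ requires the \emph{joint} linear independence of the coefficient vectors of all of $\{y_j^{(k)} : j<i,\ k<d_j\}$, not merely of each block separately (this follows from the minimality of the earlier $d_j$, by the same argument you give for $j=1$); and in part~(3), the claim that the affine-linear part of $J$ is already spanned over $\mathbb{C}(\bm{\mu})$ by its intersection with $k^\ast\{\mathbf{y},\mathbf{u}\}$ needs the observation that $J$ equals the $\mathbb{C}(\bm{\mu})$-span of $J\cap k^\ast\{\mathbf{y},\mathbf{u}\}$ together with a decomposition of coefficients over a $k^\ast$-basis of $\mathbb{C}(\bm{\mu})$ --- the paper sidesteps exactly this point by quoting the reduced-Gr\"obner-basis characterization of the field of definition.
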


\begin{proof}
    We fix an ordering $y_1 < \ldots < y_m$ of outputs. 
    Assume that there are full sets of input-output equations $p_1, \ldots, p_m$ and $q_1, \ldots, q_m$ with respect to this ordering. 
    Let $\ell$ be the smallest integer such that $p_\ell \neq q_\ell$.
    By the definition, $\ord_{y_\ell} p_\ell = \ord_{y_\ell} q_\ell$.
    Then $\ord_{y_i} (p_\ell - q_\ell) < \ord_{y_i} p_i$ for every $i \leqslant \ell$; this contradicts  the definition of a full set of input-output equations.
    To finish the proof  of
    part~\ref{prA6:part1} of the proposition,  we will
    show the existence of a full set of input-output equations.
 
    Let $J := I_{\Sigma} \cap \mathbb{C}(\bm{\mu})\{\mathbf{y}, \mathbf{u}\}$.
    Consider the set of differential polynomials \[S := \{\mathbf{x}' - \mathbf{f}, \mathbf{x}'' - \mathbf{f}', \ldots, \mathbf{y} - \mathbf{g}, \mathbf{y}' - \mathbf{g}', \ldots\}.\]
    By the definition of $I_{\Sigma}$, 
    $S$
    generates $I_{\Sigma}$.
    Since these generators
    are linear, $I_{\Sigma}$ has a linear Gr\"obner basis (see \cite[Definition 1.4]{Iima2009}) with respect to any monomial ordering.
    Since $J$ is an elimination ideal of $I_{\Sigma}$, it also has a linear Gr\"obner basis with respect to any monomial ordering.
    Consider any lexicographic monomial ordering on $\mathbb{C}(\bm{\mu})\{\mathbf{x}, \mathbf{y}, \mathbf{u}\}$ such that
    \begin{itemize}
        \item any derivative of any of $y_1,\ldots,y_m$ is greater than any derivative of any of $x_1,\ldots,x_n$;
        \item any derivative of any of $x_1,\ldots,x_n$ is greater than any derivative of any of  $u_1,\ldots,u_\kappa$;
        \item for $a = x, y$, $a_{i_1}^{(j_1)} > a_{i_2}^{(j_2)}$ iff $i_1 > i_2$ or $i_1 = i_2$ and $j_1 > j_2$.
    \end{itemize}
    Observe that $S$ is a Gr\"obner basis of $I_{\Sigma}$ with respect to any such monomial ordering.
    Therefore, $\mathbf{u}$ and their derivatives are algebraically independent modulo $I_{\Sigma}$, and the transcendence degree of $\mathbb{C}(\bm{\mu})\{\mathbf{x}, \mathbf{y}, \mathbf{u}\}$ over $\mathbb{C}(\bm{\mu}) \{\mathbf{u}\}$ modulo $I_{\Sigma}$ is finite.
    
    Consider the restriction of the ordering described above to $\mathbb{C}(\bm{\mu})\{\mathbf{y}, \mathbf{u}\}$.
    Consider the reduced Gr\"obner basis $B$ of $J$ with respect to this ordering.
    As we have shown, it is linear.
    Since the transcendence degree of $\mathbb{C}(\bm{\mu})\{\mathbf{y}, \mathbf{u}\}$ over $\mathbb{C}(\bm{\mu})\{\mathbf{u}\}$ modulo $J$ is finite, for every $1 \leqslant i \leqslant m$, there is a derivative of $y_i$ among the leading terms of $B$.
    Moreover, by differentiating the corresponding element of $B$, we see that all higher derivatives of $y_i$ will appear as leading terms of $B$.
    
    For each $1 \leqslant i \leqslant m$, we set $p_i$ to be the element in $B$ with the leading term being $y_i^{(j)}$ such that $j$ is the smallest possible.
    Then the fact that $p_1, \ldots, p_m$ are a part of the reduced Gr\"obner basis implies that they form a full set of input-output equations with respect to the ordering $y_1 < y_2 <  \ldots < y_m$.
    This finishes the proof of 
    part~\ref{prA6:part1} of the proposition.
    
    To prove
    part~\ref{prA6:part2} of the proposition, observe that the derivatives of $p_1, \ldots, p_m$ form a Gr\"obner basis of $[p_1, \ldots, p_m]$ with respect to the described ordering.
    Thus, it remains to show that $[p_1, \ldots, p_m] = J$.
    Assume that there is $q \in J \setminus [p_1, \ldots, p_m]$.
    By reducing it with respect to appropriate derivatives of $p_1, \ldots, p_m$, we can assume that $\ord_{y_i} q < \ord_{y_i} p_i$ for every $1 \leqslant i \leqslant m$.
    But this would imply that $p_1, \ldots, p_m$ is not a full set of input-output equations,  proving
    part~\ref{prA6:part2} of the proposition.
    
    To prove 
    part~\ref{prA6:part3} of the proposition, note that, since a full set of input-output equations is a part of a reduced Gr\"obner basis of $J$, its coefficients 
   are in
    the field of definition of $J$.
    On the other hand, since the set of all derivatives of $p_1, \ldots, p_m$ forms a Gr\"obner basis of $J$ and the coefficients of these derivatives are the same 
    as the coefficients of $p_1, \ldots, p_m$, the coefficients of $p_1, \ldots, p_m$ generate the field of definition of $J$.
\end{proof}


\subsection{Input-output equations based on the Cramer's rule and the transfer function matrix}\label{sec:transfer}

Recall~\cite[page~444]{DiStefanoBook} that the \emph{transfer function matrix} of  
 a linear system
\[
\begin{cases}
  \mathbf{x}' = A(\bm{\mu})\mathbf{x} + B(\bm{\mu}) \mathbf{u},\\
  \mathbf{y} = C(\bm{\mu}) \mathbf{x}.
\end{cases}
\]
is defined by
\begin{equation}\label{eq:trans_func_def}
H(\bm{\mu}, s) := C(\bm{\mu}) (sI - A(\bm{\mu}))^{-1}B(\bm{\mu}),
\end{equation}
where $s$ is a new algebraic variable and $I$ is the identity matrix.
This matrix relates the Laplace transforms of $\mathbf{y}$ and $\mathbf{u}$ under the assumption that the initial conditions are zero~\cite[page~75]{DiStefanoBook}.
The formulas~\eqref{eq:io_lincomp} and~\eqref{eq:trans_func_def} look similar and in fact are related.
We give a connection we are interested in as Lemma~\ref{lem:trans_io} below, and refer for further connection to an upcoming paper~\cite{NikkiMarisa}.

For a rational function $f \in \mathbb{C}(\bm{\mu})(s)$ in $s$, by \emph{the coefficients of $f$}, we will understand the union of the coefficients of the numerator and denominator \emph{in the reduced form} if the denominator is taken to be monic.

\begin{lemma}\label{lem:trans_io}
  Consider a linear compartment model with at least one input and whose graph is strongly connected. 
  Then the following sets generate the same subfield in $\mathbb{C}(\bm{\mu})$
  \begin{itemize}
      \item coefficients of the input-output equations~\eqref{eq:io_lincomp};
      \item coefficients of the entries of the transfer function matrix.
  \end{itemize}
\end{lemma}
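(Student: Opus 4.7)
The plan is to identify the entries of the transfer function matrix with the Laplace-transformed form of the input-output equations~\eqref{eq:io_lincomp} and then read off the coefficients from the reduced form. Write the $i$-th input-output equation as $h_i = D(\partial)(y_i) - \sum_{j \in \In} (Q_i)_j(\partial)(u_j)$, with $D = \det(\partial I - A)$ and $(Q_i)_j$ as in~\eqref{eq:lincompq}, so that $D$ and $(Q_i)_j$ are polynomials in $\partial$ with coefficients in $\mathbb{C}(\bm{\mu})$. Applying the Laplace transform under zero initial conditions and comparing with the defining property $Y_i(s) = \sum_{j \in \In} H_{ij}(s)\,U_j(s)$ of the transfer function matrix yields $H_{ij}(s) = (Q_i)_j(s)/D(s)$ for every $i \in \Out$ and $j \in \In$, now viewing $D$ and $(Q_i)_j$ as polynomials in $s$ with the same coefficients in $\mathbb{C}(\bm{\mu})$.

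The next step is to check that this ratio is already in the reduced form with monic denominator that the lemma's convention requires. Since $D(s) = \det(sI - A)$ is monic of degree $n$ in $s$, the denominator is automatically monic. For coprimality, I would apply~\cite[Proposition~3.19]{Meshkat18} to the sub-model obtained from $\Sigma$ by deleting every input except $u_j$: the graph and the matrix $A$ are unchanged, so the sub-model is still strongly connected with at least one input, and the cited proposition gives $\gcd(D,\,(Q_i)_j) = 1$ in $\mathbb{C}(\bm{\mu})[s]$ for the fixed pair $(i,j)$. This is the same restriction trick that is already used in the proof of Theorem~\ref{thm:compcoeff}.

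Combining these two observations, the reduced form of $H_{ij}$ has numerator $(Q_i)_j(s)$ and denominator $D(s)$, so the coefficients of $H_{ij}$ in the sense of the lemma are exactly the coefficients of $(Q_i)_j$ together with those of $D$. Hence the subfield of $\mathbb{C}(\bm{\mu})$ generated by the coefficients of all entries of the transfer function matrix equals the subfield generated by the coefficients of $D$ and of $(Q_i)_j$ for $i \in \Out$ and $j \in \In$, which by~\eqref{eq:io_lincomp} is precisely the subfield generated by the coefficients of the input-output equations. The main obstacle I expect is the pairwise coprimality in the second paragraph; once it is granted via the restriction trick above, the rest of the argument is a bookkeeping of what constitutes a coefficient of a rational function in reduced form.
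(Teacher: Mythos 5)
Your proof is correct, but it takes a genuinely different (and shorter) route than the paper's. The paper applies \cite[Proposition~3.19]{Meshkat18} only once, to the full model, obtaining the \emph{collective} statement $\gcd(p,q_1,\ldots,q_r)=1$ for the single-output equation $p(\partial)y=q_1(\partial)u_1+\ldots+q_r(\partial)u_r$. Since this does not rule out an individual $q_j$ sharing a factor with $p$, the entries $q_j/p$ of the transfer function need not be reduced, and the paper must first recover the coefficients of $p$ by choosing integers $n_1,\ldots,n_r$ with $n_1q_1+\ldots+n_rq_r$ coprime to $p$, and then recover each $q_j$ via bookkeeping with $g_j=\gcd(p,q_j)$. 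You instead apply the proposition to the sub-model with every input except $u_j$ deleted; since $A(G)$, hence $D$ and the minors $\det(M_{ji})$, depends only on the graph and the leaks, nothing changes except the input set, and the sub-model remains strongly connected with one input, so you get the \emph{pairwise} coprimality $\gcd(D,(Q_i)_j)=1$. This is precisely the restriction trick the paper itself validates in the proof of Theorem~\ref{thm:compcoeff} (stated there for the first output, but nothing in that argument is specific to which output is chosen), so your stronger input is legitimately available; with it, every entry $H_{ij}=(Q_i)_j(s)/D(s)$ is already in reduced form with monic denominator, and the equality of the two coefficient fields follows immediately, bypassing the integer-combination and gcd manipulations of the paper's argument. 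The identification $H_{ij}=(Q_i)_j(s)/D(s)$ that you obtain via the Laplace transform under zero initial conditions is the same identity the paper asserts by direct computation, so no gap arises there either.
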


\begin{proof}
    Since each of the equations~\eqref{eq:io_lincomp} involves only one output and each row in the matrix~\eqref{eq:trans_func_def} corresponds to an output, proving the lemma for the single-output case will yield the general case by taking the union of the respective generators.
    
    We write~\eqref{eq:io_lincomp} as $p(\partial)y = q_1(\partial)u_1 + \ldots + q_r(\partial)u_r$ for nonzero $p(s), q_1(s), \ldots, q_r(s) \in \mathbb{C}(\bm{\mu})[s]$ such  that $p(s)$ is monic.
    Let $F_1$ be the field generated by the coefficients of $p, q_1, \ldots, q_r$.
    Since the graph is strongly connected and has an input, \cite[Proposition~3.19]{Meshkat18} implies that $\gcd(p, q_1, \ldots, q_r) = 1$.
    A direct computation shows that $H(\bm{\mu})$ defined by~\eqref{eq:trans_func_def} is equal to 
    \[
    H(\bm{\mu}) := (h_1(s),h_2(s),\ldots,h_r(s)) = \left(\frac{q_1(s)}{p(s)}, \frac{q_2(s)}{p(s)}, \ldots, \frac{q_r(s)}{p(s)}   \right).
    \]
    Let $F_2$ be the field generated by the coefficients of $h_1, \ldots, h_r$.
    For all integers $n_1, \ldots, n_r$, the coefficients of $n_1h_1 + \ldots + n_rh_r$ belong to $F_2$.
    Since $q_1, \ldots, q_r, p$ are coprime, there exist integers $n_1,\ldots,n_r$ such that $n_1q_1 + \ldots + n_rq_r$ is coprime with $p$, so $p$ is the denominator of $n_1h_1 + \ldots + n_rh_r$.
    Hence, the coefficients of $p$ belong to $F_2$.
    Let $g_i = \gcd(p,q_i)$ and $p_i = p/g_i$.  
    By definition, the coefficients of $p_i$ are in $F_2$, so the coefficients of $g_i$ are in $F_2$.  
    By definition, the coefficients of $q_i/g_i$ are in $F_2$, so the coefficients of $q_i$ are in $F_2$.
    Thus, $F_1 \subset F_2$.
    
    To prove $F_2 \subset F_1$, note that the coefficients of the remainder and quotient of two polynomials belong to the field generated by the coefficients of the polynomials.
    Since the numerator and denominator of $h_i$ are equal to $q_i / \gcd(p, q_i)$ and $p / \gcd(p, q_i)$, respectively, we have $F_2 \subset F_1$, so $F_1 = F_2$.
\end{proof}


\section*{Acknowledgments}We are grateful to the CCiS at CUNY Queens College for the computational resources and to Julio Banga,  Joseph DiStefano, Marisa Eisenberg, Nikki Meshkat, Maria Pia Saccomani, Anne Shiu, Seth Sullivant, Alejandro Villaverde, and to referees for useful discussions and suggestions.


\bibliographystyle{IEEEtran}
\bibliography{bibdata}
\end{document}